\newtheorem{thm}{Theorem}[section]
\newtheorem{lemma}[thm]{Lemma}
\newtheorem{prop}[thm]{Proposition}
\newtheorem{question}[thm]{Question}
\newtheorem{claim}[thm]{Claim}
\newtheorem{defi}[thm]{Definition}
\numberwithin{equation}{thm}
\theoremstyle{remark}
\newtheorem{remark}[thm]{Remark}
\renewcommand{\bar}[1]{#1\llap{$\overline{\phantom{\rm#1}}$}}
\newcommand{\norm}[1]{\ensuremath{\| #1 \|}}
\newcommand{\lra}{\longrightarrow}
\DeclareMathOperator{\hhat}{{\widehat{h}}}
\newcommand{\prep}{\operatorname{Prep}}
\newcommand{\bA}{{\mathbb A}}
\newcommand{\N}{{\mathbb N}}
\newcommand{\Q}{{\mathbb Q}}
\newcommand{\C}{\mathbb{C}}
\newcommand{\PP}{{\mathbb P}}
\newcommand{\Kbar}{{\bar{K}}}
\newcommand{\Qbar}{\bar \Q}
\newcommand{\Fbar}{{\bar{F}}}
\newcommand{\Gal}{{\rm Gal}}
\newcommand{\cV}{\mathcal{V}}
\newcommand{\into}{\hookrightarrow}
\renewcommand{\l}{\lambda}
\DeclareMathOperator{\id}{{id}}
\newcommand{\bfa}{{\mathbf a}}
\newcommand{\bfh}{{\mathbf h}}
\newcommand{\bfc}{{\mathbf c}}
\newcommand{\bff}{{\mathbf f}}
\newcommand{\bfg}{{\mathbf g}}
\newcommand{\bft}{{\mathbf t}}
\newcommand{\bfu}{{\mathbf u}}
\newcommand{\bfX}{{\mathbf X}}
\newcommand{\cL}{\mathcal{L}}
\newcommand{\bP}{{\mathbb P}}
\newcommand{\cO}{\mathcal{O}}
\newcommand{\bflambda}{\boldsymbol{\l}}
\begin{document}
{\em Dedicated to Joseph Silverman on the occasion of his 60th birthday}
\vspace{1cm}



\title{Unlikely Intersection For Two-Parameter Families of Polynomials}

\author{D.~Ghioca}
\address{
Dragos Ghioca\\
Department of Mathematics\\
University of British Columbia\\
Vancouver, BC V6T 1Z2\\
Canada
}
\email{dghioca@math.ubc.ca}

\author{L.-C.~Hsia}
\address{
Liang-Chung Hsia\\
Department of Mathematics\\
National Taiwan Normal University\\
Taipei, Taiwan, ROC
}
\email{hsia@math.ntnu.edu.tw}

\author{T.~J.~Tucker}
\address{
Thomas Tucker\\
Department of Mathematics\\
University of Rochester\\
Rochester, NY 14627\\
USA
}
\email{ttucker@math.rochester.edu}

\thanks{The first author was partially supported by NSERC. The second
  author was partially supported by NSC Grant  102-2115-M-003-002-MY2 and
  he also acknowledges the support from NCTS.   The third author was
  partially supported by NSF Grants
  DMS-0854839 and DMS-1200749.}

\subjclass[2010]{Primary: 37P05 Secondary: 37P30, 11G50, 14G40}
\keywords{Arithmetic dynamics, unlikely intersection, canonical heights,  equidistribution}


\begin{abstract}
Let $c_1, c_2, c_3$ be distinct complex numbers, and let $d\ge 3$ be an integer. We show that the set of all pairs $(a,b)\in \C\times \C$ such that each $c_i$ is preperiodic for the action of the polynomial $x^d+ax+b$ is not Zariski dense in the affine plane.
\end{abstract}


\maketitle


\section{Introduction}
\label{intro}
The results of this paper are in the context of the \emph{unlikely
  intersections} problem in arithmetic dynamics, and more generally in
arithmetic geometry. The philosophy of the unlikely intersections
principle in arithmetic geometry says that an \emph{event} that is
\emph{unlikely} to occur in a geometric setting must be explained by a
(rigid) arithmetic property. Roughly speaking, in this context, an event is said to be
``unlikely'' when the number of conditions it satisfies is very large
relative to the number of parameters of the underlying space.
For more details, see the Pink-Zilber Conjecture \cite{Pink}, the
various results (such as \cite{BMZ}) in this direction, and also the
beautiful book of Zannier \cite{Zannier}.

At the suggestion of Zannier (whose question was  motivated by \cite{M-Z-1, M-Z-2, M-Z-3}), Baker-DeMarco proved a first result \cite{Matt-Laura} for the unlikely intersection principle this time in arithmetic dynamics. Baker and DeMarco \cite{Matt-Laura} proved that given complex numbers $a$ and $b$, and an integer $d\ge 2$, if there exist infinitely many $t\in \C$ such that both $a$ and $b$ are preperiodic under the action of $z\mapsto z^d+t$, then $a^d=b^d$. Several results followed (see \cite{BD-preprint, prep, metric, Hexia, Hexib}), each time the setting being the following: given two starting points for two families of (one-parameter) algebraic dynamical systems, there exist infinitely many parameters (or more generally, a Zariski dense set of parameters, as considered in \cite{metric}) for which both points are preperiodic at the same time if and only if there is a (precise, global) relation between the two families of dynamical systems and the two starting points.

We note that all results known so far regarding dynamical unlikely
intersection problems are in the context of simultaneous
preperiodicity of two points in a one-parameter families of dynamical
systems, except for~\cite[Theorem~1.4]{metric} which is the first
instance regarding dynamical systems under the action of a family of
endomorphisms of $\bP^2$ parameterized by points of a higher
dimensional variety.   One might ask more generally for dynamical
unlikely intersection problems involving the simultaneous
preperiodicity of $n+1$ points in an $n$-parameter family of dynamical
systems, where $n$ is any positive integer. In this paper, we
consider the general family of polynomial maps on $\PP^1$ of degree
$d\ge 3$ in \emph{normal form} (i.e., polynomials of the form
$z^d+a_{d-2}z^{d-2}+\cdots + a_0$ with parameters $a_{d-2}, \ldots,
a_0$).  The dimension of the space of such maps is $d-1$.  We pose the
following question about simultaneous preperiodicity of $d$ constant
points for polynomials in this family.

\begin{question}
\label{general question}
Let $d\ge 3$ be an integer,  let $c_1,\dots, c_d$ be distinct complex numbers, and let $\bff_\bfa(z) = z^d+a_{d-2}z^{d-2}+\cdots + a_0$ be a family of degree $d$ polynomials  in normal form parametrized by $\bfa = (a_{d-2},\ldots, a_0)\in \bA^{d-1}(\C)$. Is it true that  the set of parameters $\bfa$ such that each $c_i$ is preperiodic under the action of the polynomial $\bff_\bfa$ is not Zariski dense in $\bA^{d-1}$?
\end{question}

\begin{remark}
In the case where $d = 2$, it follows from the main result of~\cite{Matt-Laura} that the set of complex numbers $t$ such that $c_1, c_2$ are preperiodic under the action of the polynomial $\bff_t(z) = z^2 + t$ is Zariski dense in $\bA^1$ if and only if
$c_1^2 = c_2^2.$ Hence, in this case the set of parameters $t$ such that both $c, -c$ (which are distinct if $c\ne 0$) are preperiodic under the action of $\bff_t$ is Zariski dense in the complex affine line.
\end{remark}

In this paper we are able to answer positively the above question when $d=3$; actually, we can  prove a stronger result, as follows.

\begin{thm}
\label{main result}
Let $c_1, c_2, c_3\in \C$ be distinct complex numbers, and let $d\ge 3$ be an integer. Then the set of all pairs $(a_1,a_0)\in \C\times \C$ such that each $c_i$ is preperiodic for the action of $z\mapsto z^d+a_1 z+a_0$ is not Zariski dense in $\bA^2$.
\end{thm}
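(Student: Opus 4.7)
The plan is to proceed by contradiction using arithmetic equidistribution in dimension two, extending the Baker--DeMarco strategy for one-parameter families. Suppose the set $S := \{(a_1, a_0) \in \C^2 : c_i \text{ is preperiodic under } f_{a_1, a_0}(z) := z^d + a_1 z + a_0 \text{ for } i = 1, 2, 3\}$ is Zariski dense in $\bA^2$. A standard specialization argument reduces us to the case $c_1, c_2, c_3 \in \bar\Q$ with $S \cap \bA^2(\bar\Q)$ still Zariski dense. For each $i$, the function $H_i(a_1, a_0) := \hat{h}_{f_{a_1,a_0}}(c_i)$ arises from a semipositive, continuous adelic metric on $\mathcal{O}(1)$ over $\bP^2$, whose local potentials are the dynamical Green functions $G_{c_i, v}(a_1, a_0) := \lim_{n\to\infty} d^{-n} \log^+ |f^n_{a_1, a_0}(c_i)|_v$; these are continuous plurisubharmonic functions on $\bA^2(\C_v)$ (or the Berkovich analog at non-archimedean $v$), share the same logarithmic growth at infinity independent of $i$, and vanish exactly where $c_i$ is preperiodic.

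Since $S$ is Zariski dense and consists of points with $H_i = 0$ for all $i$, one can extract a Zariski-dense generic sequence of $\bar\Q$-points that is simultaneously small for all three adelic heights. Yuan's arithmetic equidistribution theorem on $\bP^2$ applied to each of the three adelic metrized line bundles then forces equality of the associated two-dimensional bifurcation measures at every place: $(dd^c G_{c_1, v})^2 = (dd^c G_{c_2, v})^2 = (dd^c G_{c_3, v})^2$ on $\bA^2(\C_v)$ for archimedean $v$, and analogously on the Berkovich analytification for non-archimedean $v$.

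The next step is to upgrade measure equality to pointwise equality of the Green functions themselves. Since all three $G_{c_i, v}$ share the same growth at infinity, their pairwise differences $G_{c_i, v} - G_{c_j, v}$ are bounded on $\bA^2(\C_v)$. Combined with equality of Monge--Amp\`ere masses, a pluripotential-theoretic comparison principle --- Bedford--Taylor in the archimedean case, or Boucksom--Favre--Jonsson on Berkovich analytic spaces in the non-archimedean case --- forces $G_{c_i, v} = G_{c_j, v}$ pointwise. Summing over places yields the identity $\hat{h}_{f_{a_1, a_0}}(c_1) = \hat{h}_{f_{a_1, a_0}}(c_2) = \hat{h}_{f_{a_1, a_0}}(c_3)$ for every $(a_1, a_0) \in \bA^2(\bar\Q)$.

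Finally, to derive a contradiction from this pointwise identity of the three canonical heights as functions of the parameters, I would perform an asymptotic expansion at an archimedean place as $|a_0| \to \infty$ with $a_1$ and $c$ held fixed: using $f(c) = a_0 \bigl(1 + (c^d + a_1 c)/a_0\bigr)$, one obtains $\hat{h}_{f_{a_1, a_0}}(c) = \tfrac{1}{d} \log|a_0| + \tfrac{1}{d}\,\mathrm{Re}\bigl((c^d + a_1 c)/a_0\bigr) + O(|a_0|^{-2})$. Matching subleading terms for $c = c_i$ and $c = c_j$ as $(a_1, a_0)$ varies forces the polynomial identity $c_i^d + a_1 c_i \equiv c_j^d + a_1 c_j$ in $a_1$, hence $c_i = c_j$ for every pair --- contradicting the distinctness of $c_1, c_2, c_3$. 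The main obstacle is the third paragraph, where one must pass from equality of two-dimensional Monge--Amp\`ere measures to pointwise equality of the plurisubharmonic potentials themselves; this requires careful use of comparison principles in both archimedean and non-archimedean (Berkovich) pluripotential theory, and if the direct 2D route is unavailable, one may need to fall back on restricting to suitable one-parameter subfamilies and invoking the prior Ghioca--Hsia--Tucker / Baker--DeMarco results curve by curve.
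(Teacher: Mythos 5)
The central gap is in your second and third paragraphs. You treat each individual function $(a_1,a_0)\mapsto \hhat_{f_{a_1,a_0}}(c_i)$ as coming from a semipositive, \emph{continuous} adelic metric on $\cO_{\bP^2}(1)$ over the compactified parameter space, with the three Green functions having the same logarithmic growth at infinity and bounded pairwise differences. This is false: along the line $\{c_i^d+a_1c_i+a_0=c_i\}$ the point $c_i$ is fixed, so $\hhat_{f_{a}}(c_i)\equiv 0$ while $h(a_1,a_0)\to\infty$ and $\hhat_{f_a}(c_j)$ grows like $\tfrac1d h(a_1,a_0)$ for generic $a$ on that line; hence $G_{c_i,v}-G_{c_j,v}$ is unbounded and the would-be metric for $c_i$ degenerates at the point at infinity $[1:-c_i:0]$. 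This is precisely why the paper never metrizes one point at a time: it works with the product map $\Phi=\bff\times\cdots\times\bff$ on $\bP^m$ and the tuple points $\bfc^{(j)}$ formed from $m$ of the $m+1$ points, and all of Section~\ref{sec:specialization} (Theorem~\ref{specialization theorem}, proved via a Vandermonde argument that uses the distinctness of the $c_i$) is devoted to the estimate $\hhat_{\Phi_{\bflambda}}(\bfc^{(j)})=\tfrac1d h(\bflambda)+O(1)$, which is exactly what makes the limiting metrics continuous, semipositive and adelic. Relatedly, your intermediate conclusion that $\hhat_{f_a}(c_1)=\hhat_{f_a}(c_2)=\hhat_{f_a}(c_3)$ for \emph{every} parameter is demonstrably false (same counterexample: $c_1$ fixed, $c_2$ in the escape region), so no Monge--Amp\`ere comparison principle can deliver it; what Yuan/Gubler equidistribution actually yields (via \cite[Corollary~4.3]{metric}, applied to the two tuple metrics, which agree on a suitable linear slice of the hyperplane at infinity) is only the simultaneous-vanishing statement of Theorem~\ref{equidistribution theorem}: at every parameter, if two of the three points are preperiodic then so is the third.

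Even granting that statement, a substantial second stage remains, which your final sentence only gestures at but which is the paper's actual argument: restrict to the line where $c_1$ is fixed, obtaining the one-parameter family $\bfg_t(x)=x^d+tx+(\alpha+\beta t)$, apply \cite[Theorem~1.3]{BD-preprint} together with Nguyen's classification \cite{Nguyen} of polynomials commuting with $\bfg_t$ to get $\bfg_t^{m}(c_2)=\bfg_t^{m}(c_3)$ identically in $t$, and then compare leading and subleading coefficients in $t$ to force $c_2=c_3$ when $d\ge 4$; for $d=3$ the subleading terms interfere and the paper needs the analogous relations coming from the lines fixing $c_2$ and $c_3$, plus a roots-of-unity argument. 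Your paragraph-four expansion cannot substitute for this, since it rests on the false global height identity and also conflates a single archimedean local expansion with the global height. Finally, the reduction of arbitrary complex $c_i$ to $\Qbar$ by a ``standard specialization argument'' is not justified (Zariski density of the simultaneous-preperiodicity locus is not obviously preserved); the paper instead works over a function field of finite transcendence degree over $\Qbar$, using Gubler's equidistribution \cite{Gubler} and the Benedetto--Baker isotriviality lemma together with Proposition~\ref{normal form proposition} to relate vanishing canonical height to preperiodicity.
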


Theorem~\ref{main result} implies that there are at most finitely many plane curves containing all  pairs of parameters $\bfa = (a_1,a_0)\in \C^2$ such that all $c_i$ (for $i=1,2,3$) are preperiodic under the action of the polynomial $\bff_\bfa(z) = z^d+a_1 z+a_0$. The result is best possible as shown by the following example: if $c\in\C$ is a nonzero number, and $\zeta\in\C$ is a $(d-1)$-st root of unity, then there exist infinitely many $a_1\in\C$ such that $0$, $c$ and $\zeta\cdot c$ are preperiodic for the polynomial $z^d+a_1 z$. The idea is that $c$ is preperiodic for $z^d+a_1 z$ if and only if $\zeta\cdot c$ is preperiodic for $z^d+a_1 z$, and there exist infinitely many $a_1\in\C$ such that $c$ is preperiodic under the action of $z^d+a_1 z$ (by \cite[Proposition~9.1]{prep} applied to the family of polynomials $\bfg_{t}(z):=z^d +t z$ and the starting point $\bfg_{t}(c)=ct+c^d$). There are other more complicated examples showing that the locus of $(a_1,a_0)\in \bA^2$ can be $1$-dimensional. For example, if $d=3$ and $c_1+c_2+c_3=0$, then letting
$$-a_1:=c_1^2+c_1c_2+c_2^2=c_1^2+c_1c_3+c_3^2=c_2^2+c_2c_3+c_3^2,$$
we see that $\bff_\bfa(c_1)=\bff_\bfa(c_2)=\bff_\bfa(c_3)$ and thus there are infinitely many $a_0\in\C$ such that each $c_i$ is preperiodic under the action of $\bff_\bfa$ (for $\bfa=(a_1,a_0)$ with $a_1$ as above).

Also, one cannot expect that Theorem~\ref{main result} can be extended to any $2$-parameter family of polynomials and three starting points. Indeed, for any nonzero $c_1\in\C$ and any $c_2\in\C$, there exists a Zariski dense set of points $(a_1,a_0)\in \bA^2(\C)$ such that the points $c_1$, $-c_1$ and $c_2$ are preperiodic under the action of the polynomial $z^4+a_1z^2+a_0$. We view the $2$-parameter family of polynomials $\bff_{a_1,a_0}(z):=z^d+a_1z+a_0$ as the natural extension of the family of cubic polynomials in normals form, thus explaining why the conclusion of Theorem~\ref{main result} holds for this $2$-parameter family of polynomials, while it fails for other $2$-parameter families of polynomials. Also, the family of polynomials from Theorem~\ref{main result} is the generalization of the $1$-parameter family of polynomials $\bfg_t(z):=z^d+t$ considered by Baker and DeMarco in \cite{Matt-Laura}.

Even though we believe Question~\ref{general question} should be true
in general, we were not able to fully extend our method to the general
case. As we will explain in the next section, there are significant
arithmetic complications arising in the last step of our strategy of
proof when we deal with families of polynomials depending on more than
$2$ parameters.  On the other hand, the last step our proof is
inductive in that it reduces to applying one-dimensional results of
Baker and DeMarco \cite{BD-preprint} to a line in our two-dimensional parameter space.
Thus, we are hopeful that there is a more general inductive argument
that will allow one to obtain a full result in arbitrary dimension.

We describe briefly the contents of our paper. In
Section~\ref{sec:method} we discuss the strategy of our proof and also
state in Theorem~\ref{specialization theorem} a by-product of our proof regarding the variation of the canonical height in an $m$-parameter family of endomorphisms of $\bP^m$ for any $m\ge 2$. In Section~\ref{sec:notation} we introduce our notation and state the necessary background results used in our proof. In Section~\ref{sec:specialization} we prove Theorem~\ref{specialization theorem}, and based on our result in Section~\ref{equidistribution section} we prove a general unlikely intersection statement for the dynamics of polynomials in normal form of arbitrary degree (see Theorem~\ref{equidistribution theorem}). We conclude in Section~\ref{proof section} by proving Theorem~\ref{main result}  using Theorem~\ref{equidistribution theorem}.

\section{Our method of proof}
\label{sec:method}

In the section, we give a sketch of the method used in the proof of our main result. We first prove that if there exist a Zariski dense set of points $\bfa = (a_1,a_0)\in \bA^2(\C)$ such that $c_1,c_2,c_3$ are simultaneously preperiodic under the action of
$$\bff_{\bfa}(z):=z^d+a_1z+a_0,$$
then for \emph{each} point  $\bfa\in \bA^2(\C)$, if any two of the points $c_i$ are preperiodic under the action of $\bff_{\bfa}$, then also the third point $c_i$ is preperiodic. We prove this statement using the powerful equidistribution theorem of Yuan \cite{Yuan} for generic sequences of points of small height on projective varieties $X$ endowed with a metrized line bundle (we also use the function field version of this equidistribution theorem proven by  Gubler \cite{Gubler}). Such equidistribution statements were previously obtained when $X$ is $\bP^1$ by Baker-Rumely \cite{Baker-Rumely} and  Favre-Rivera-Letelier \cite{FRL, favre-rivera}, and when $X$ is an arbitrary curve by Chambert-Loir \cite{CL} and  Thuillier \cite{Thuillier}. Our method is similar to the one employed in \cite{metric} and it extends to polynomials in normal form of arbitrarily degree $d\ge 3$; i.e., by the same technique we prove (see Theorem~\ref{equidistribution theorem}) that given $d$ distinct numbers $c_1,\dots, c_d\in \C$, if there exist a Zariski dense set of points $\bfa=(a_{d-2},\dots, a_0)\in \bA^{d-1}(\C)$ such that each $c_i$ is preperiodic under the action of $\bff_{\bfa}(z),$ then for \emph{each} $\bfa \in \bA^{d-1}(\C)$ such that $d-1$ of the points $c_i$ are preperiodic under the action of $\bff_{\bfa}$, then \emph{all} the $d$ points $c_i$ are preperiodic.

Now, for the $2$-parameter family of polynomials
$\bff_{\bfa}(z):=z^d+a_1z+a_0$, assuming there exists a Zariski dense
set of points $\bfa \in\bA^2(\C)$ such that each $c_i$ (for $i=1,2,3$)
is preperiodic under the action of $\bff_{\bfa}$, we consider the line
$L$ contained in the parameter space $\bA^2$ along which $c_1$ is
fixed by $\bff_{\bfa}(z)$ for each $\bfa\in L(\C)$. Then we have a
$1$-parameter family of polynomials $\bfg_t$  (which is $\bff_{\bfa}$
with $\bfa$ moving along the line $L$), and moreover, for each
parameter $t$, the point $c_2$ is preperiodic for $\bfg_t$ if and only
if $c_3$ is preperiodic for $\bfg_t$.  Applying
\cite[Theorem~1.3]{BD-preprint} (combined with
\cite[Proposition~2.3]{Nguyen}), we obtain that
$\bfg_t^m(c_2)=\bfg_t^m(c_3)$ for some positive integer $m$. This
yields that the starting points $c_i$ are not all distinct, giving a contradiction.

The above argument becomes much more complicated for families of polynomials in normal form parametrized by arbitrary many variables. One could still employ the same strategy and work along the line $L\subset \bA^{d-1}$ along which each of $c_i$, for $i=1,\dots, d-2$ are fixed by $\bff_{\bfa}$ (with $\bfa \in L$). Then  \cite[Theorem~1.3]{BD-preprint} still yields a relation of the form $\bfg_t^m(c_{d-1})=\zeta\cdot \bfg_t^m(c_d)$ (for some root of unity $\zeta$, some positive integer $m$, where $\bfg_t$ is $\bff_{\bfa}$ where $\bfa:=(a_{d-2},\dots, a_0)$ is moving along the line $L$). In turn, this yields a relation between the $c_i$'s which is \emph{a priori} consistent even after redoing the same analysis with the set $\{c_1,\dots, c_{d-2}\}$ replaced by another subset of $\{c_1,\dots, c_d\}$ consisting of $(d-2)$ points. We suspect that in order to derive a contradiction one would have to analyze more general curves in the parameter space along which $(d-2)$ of the points $c_i$ are persistently preperiodic. However this creates additional problems since one would have to prove a generalization of \cite[Theorem~1.3]{BD-preprint} which seems difficult because that result relies (among other ingredients) on a deep theorem of Medvedev-Scanlon \cite{Medvedev-Scanlon} regarding the shape of periodic plane curves under the action of one-variable polynomials acting on each affine coordinate.

As a by-product of our method we obtain a result on the variation of the canonical height in an $m$-parameter family of endomorphisms of $\bP^m$ defined over a product formula field $K$ (for more details on product formula fields, see Section~\ref{sec:notation}). The family of endomorphisms of $\bP^m$ we consider here is a product of the  family of polynomials
$\bff_\bft(z) = z^d + t_1 z^{m-1} + t_2 z^{m-2} + \cdots + t_m$ where $d >  m \ge 2$ and $t_1, \ldots, t_m$ are parameters. Let $\phi := \bff_\bft \times \cdots \times \bff_\bft : \bA^m \to \bA^m$ and extend $\phi$ to a degree $d$ rational map
$\Phi : \bP^m \to \bP^m$. More precisely, let $\bfX := [X_m : X_{m-1} : \cdots : X_0]$  be a homogeneous set of coordinates on $\bP^m$ and let $\Phi_i(\bfX) = X_0^d \bff_\bft(X_i/X_0)$ for $i=m, \ldots, 1$. Then, with respect to the homogeneous coordinates $\bfX$ we have $\Phi(\bfX) = [\Phi_m(\bfX): \cdots : \Phi_1(\bfX) : X_0^d].$ It is easy to verify that $\Phi$ is actually a morphism on $\bP^m$. In the following result, when we specialize our parameter $\bft=(t_1,\dots, t_m)$ to $\bflambda = (\l_1, \ldots, \l_m)\in \bA^m(\Kbar)$, we  write $\Phi_{\bflambda} : \bP^m \to \bP^m $ for the corresponding (specialized) morphism on $\bP^m.$

\begin{thm}
\label{specialization theorem}
Let $d> m\ge 2$ be integers, let $K$ be a number field or a function field of finite transcendence degree over another field, and let $\Phi:\bP^m\lra \bP^m$ be the $m$-parameter family of endomorphisms defined as above.
Let $c_1,\dots, c_m\in\Kbar$ be distinct elements, and let $P:=[c_m:\cdots :c_1:1]\in \bP^m(\Kbar)$. Then for each $\boldsymbol{\l} = (\l_1,\dots, \l_m)\in\bA^m(\Kbar)$, we have the canonical height $\hhat_{\Phi_{\bflambda}}(P)$ constructed with respect to the endomorphism $\Phi_{\bflambda}$ of $\bP^m$ defined over $\Kbar$, and also we have the canonical height $\hhat_\Phi(P)$ constructed with respect to the endomorphism $\Phi$ of $\bP^m$ defined over $\Kbar(t_1,\dots, t_m)$. Then
\begin{equation}
\label{conclusion specialization theorem}
\hhat_{\Phi_{\bflambda}}(P)=\hhat_\Phi(P)\cdot h\left((\l_1,\dots , \l_m)\right)+ O(1),
\end{equation}
where $h\left((\l_1,\dots , \l_m)\right)$ is the Weil height of the point $(\l_1,\dots ,\l_m)\in \bA^m(\Kbar)$ and the constant in $O(1)$ depends  on $c_1, \ldots, c_m$ only.
\end{thm}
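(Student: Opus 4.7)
The plan is to adapt the Tate telescoping argument of Call and Silverman for variation of canonical heights in families to this multi-parameter setting. Because on the affine chart $\{X_0 \ne 0\}$ the endomorphism $\Phi$ acts coordinate-wise by $\bff_\bft(z) = z^d + t_1 z^{m-1} + \cdots + t_m$, iteration yields $\Phi_\bflambda^n(P) = (\bff_\bflambda^n(c_m), \ldots, \bff_\bflambda^n(c_1))$, and over $K(\bft)$ each coordinate $\bff_\bft^n(c_i) \in K[\bft]$ has total degree exactly $d^{n-1}$ for $n \ge 1$ — the hypothesis $d > m$ guarantees that the outer $z^d$ dominates the $t_j z^{m-j}$ terms in each step, so degrees multiply by $d$ without cancellation. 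This degree calculation already identifies $\hhat_{\bff_\bft}(c_i) = 1/d$ for every $i$, and $\hhat_\Phi(P) = 1/d$ as well, each computed with the standard hyperplane polarization on the base.

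For each fixed $i$, I would first prove the one-variable specialization
\[
\hhat_{\bff_\bflambda}(c_i) = \hhat_{\bff_\bft}(c_i)\cdot h(\bflambda) + O(1),
\]
with the $O(1)$ depending only on $c_i$. This rests on the place-wise estimate
\[
\bigl|\log^+|\bff_\bflambda(z)|_v - d\log^+|z|_v\bigr| \le \log^+\max_j|\lambda_j|_v + C_v,
\]
where $C_v$ is nonzero only at the archimedean places and is uniformly bounded; the bound uses $d > m$ so that $z^d$ strictly dominates each $\lambda_j z^{m-j}$ once $\max(|z|_v, |\lambda_j|_v)$ exceeds a bounded threshold. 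Summing over places of $\Kbar$ gives $\bigl|h(\bff_\bflambda(z)) - d\cdot h(z)\bigr| \le h(\bflambda) + O(1)$, and substituting this into the Tate telescope $\hhat_{\bff_\bflambda}(c_i) = h(c_i) + \sum_{n \ge 0} d^{-(n+1)}\bigl(h(\bff_\bflambda^{n+1}(c_i)) - d\cdot h(\bff_\bflambda^n(c_i))\bigr)$ delivers the one-variable formula, the coefficient $\hhat_{\bff_\bft}(c_i)$ appearing as the asymptotic degree $\lim_n \deg_\bft(\bff_\bft^n(c_i))/d^n$.

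To lift the per-coordinate estimate to $\bP^m$, I would use that the coordinate-wise structure of $\Phi$ gives the local identity $\hhat_{\Phi_\bflambda, v}(P) = \max_i \hhat_{\bff_\bflambda, v}(c_i)$ at each place $v$ (since the sup-norm on $\bP^m$ reduces $\log \|\cdot\|_v$ to $\max_i \log^+|x_i|_v$), and analogously for $\hhat_{\Phi, v}(P)$ over $K(\bft)$. Summing over places and substituting the per-coordinate specialization then yields the formula $\hhat_{\Phi_\bflambda}(P) = \hhat_\Phi(P)\cdot h(\bflambda) + O(1)$. The main obstacle is ensuring that the $O(1)$ produced in this aggregation depends only on the $c_i$ and not on $\bflambda$: one must verify that the set of ``bad'' places (archimedean places together with the non-archimedean places where the $c_i$ or $\bff_\bflambda$ have non-trivial content) can be organised so that the sum of per-place bounded errors remains bounded uniformly in $\bflambda$, with the $h(\bflambda)$-linear piece of the place-wise estimate absorbed precisely into the main term $\hhat_\Phi(P)\, h(\bflambda)$ and the bounded remainder depending only on $c_1, \ldots, c_m$.
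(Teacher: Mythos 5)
Your proposal breaks at its central step: the per-coordinate specialization $\hhat_{\bff_{\bflambda}}(c_i)=\tfrac{1}{d}\,h(\bflambda)+O(1)$ is false when $m\ge 2$, because a single point cannot control $m$ parameters. Whenever $\bflambda$ lies on the hypersurface $c_i^d+\lambda_1c_i^{m-1}+\cdots+\lambda_m=c_i$, the point $c_i$ is fixed by $\bff_{\bflambda}$, so $\hhat_{\bff_{\bflambda}}(c_i)=0$, while such $\bflambda$ have arbitrarily large height; this is exactly the phenomenon the paper points out right after the theorem to explain why the distinctness of $c_1,\dots,c_m$ is essential, and your argument never uses distinctness anywhere, so it cannot prove the statement. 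The supporting place-wise estimate also fails in the lower direction because of cancellation: for $m=2$, $d=3$, $z=T$, $\lambda_1=-T^2$, $\lambda_2=0$ one has $\bff_{\bflambda}(z)=0$ while $d\log^+|z|_v$ exceeds $\log^+\norm{\bflambda}_v$ by $\log|T|_v$; and even granting $|h(\bff_{\bflambda}(z))-d\,h(z)|\le h(\bflambda)+O(1)$, the Tate telescope only yields the two-sided bound $|\hhat_{\bff_{\bflambda}}(c_i)-h(c_i)|\le h(\bflambda)/(d-1)+O(1)$, not an asymptotic equality with main term $h(\bflambda)/d$. Finally, the aggregation step does not go through either: the decomposition of $\hhat_{\Phi_{\bflambda}}(P)$ is a sum over places of the \emph{maximum} of the coordinate-wise local contributions, and a sum of maxima is not determined by the individual global heights $\hhat_{\bff_{\bflambda}}(c_i)$ (which are maxima inside sums); so even correct per-coordinate formulas would not assemble into the theorem.

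The paper's proof works precisely where yours is silent: it uses that $\bflambda\mapsto\Phi_{\bflambda}(\bfc)=(\bff_{\bflambda}(c_m),\dots,\bff_{\bflambda}(c_1))$ is an affine-linear automorphism whose matrix is the Vandermonde matrix in the $c_i$, invertible because the $c_i$ are distinct. This gives three things: (i) $h(\Phi_{\bflambda}(\bfc))=h(\bflambda)+O(1)$ with the $O(1)$ depending only on $\bfc$ (Lemma~\ref{reduction to M_1}); (ii) at every nonarchimedean place with $\norm{\bfc}_v=1$ and $|c_i-c_j|_v=1$, inverting the linear system shows $\norm{\Phi_{\bflambda}^n(\bfc)}_v=\norm{\bflambda}_v^{d^{n-1}}$ exactly for all $n$ (Lemma~\ref{almost all v work}); (iii) at the finitely many remaining places, a bounded-distortion estimate $1/C\le M_{n+1}/M_n^d\le C$, with $C=C(v,\bfc)$ uniform in $\bflambda$ (Claims~\ref{again in a finite ball} and \ref{outside the finite ball}, whose proof again inverts the system via \eqref{solving for l_i}), shows the local limit $\lim_n d^{-n}\log\norm{\Phi_{\bflambda}^n(\bfc)}_v$ is within $O_v(1)$ of $d^{-1}\log\norm{\Phi_{\bflambda}(\bfc)}_v$. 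Summing over places gives $\hhat_{\Phi_{\bflambda}}(\bfc)=h(\bflambda)/d+O(1)$. The missing idea in your write-up is exactly this: one must recover $\bflambda$, place by place and up to bounded error, from the \emph{single} iterate $\Phi_{\bflambda}(\bfc)$, which is possible only because the $m$ distinct points furnish an invertible Vandermonde system; no argument treating the coordinates $c_i$ one at a time can succeed.
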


It is essential in Theorem~\ref{specialization theorem} that the $c_i$'s are distinct. Indeed, assume $m=2$ and $c_1=c_2=c\in K$. Then for each $\l_1,\l_2\in\Kbar$ satisfying
\begin{equation}
\label{fixed equation}
c^d+\l_1c+\l_2=c,
\end{equation}
we have that the point $P:=[c:c:1]$ is preperiodic under the action of $\Phi_{\l_1,\l_2}$ and thus $\hhat_{\Phi_{\l_1,\l_2}}(P)=0$. On the other hand, $\hhat_{\Phi}(P)=1/d$ (after an easy computation using degrees on the generic fiber of $\Phi$) and thus \eqref{conclusion specialization theorem} cannot hold because there are points $(\l_1,\l_2)\in\bA^2(\Kbar)$ satisfying \eqref{fixed equation} of arbitrarily large height.

Theorem~\ref{specialization theorem} is an improvement of a special
case of Call-Silverman's general result \cite{Call-Silverman} for the
variation of the canonical height in arbitrary families of polarizable
endomorphisms $\Phi_t$ of projective varieties $X$ parametrized by
$t\in T$ (for some base scheme $T$). In the case where the base
variety $T$ is a curve, Call and
Silverman~\cite[Theorem~4.1]{Call-Silverman} have shown that for $P\in
X(\Qbar)$, then
\begin{equation}
  \label{eq:height specialization}
  \hhat_{\Phi_t}(P) = \hhat_{\Phi}(P) h(t) + o(h(t))
  \end{equation}
as we vary $t\in T(\Qbar)$ where $h(\cdot)$ is a height function  associated to a degree one divisor on $T$. Their result generalizes a result of Silverman~\cite{Silverman83} on heights
of families of abelian varieties. In a recent paper~\cite{ingram10},
Ingram improves the error term to $O(1)$ when $T$ is a curve, $X$ is $\bP^1$, and the family of endomorphisms $\Phi$ is totally ramified at infinity (i.e., $\Phi$ is a polynomial mapping).
This result is an analogue of Tate's theorem~\cite{tate83} in  the
setting of arithmetic dynamics.

In order to use Yuan's equidistribution theorem \cite{Yuan} (and same for Gubler's extension \cite{Gubler} to the function field setting) for points of small height to our situation, the
error term in~\eqref{eq:height specialization} needs to be controlled within
$O(1)$ when $\Phi$ is an endomorphism of $\bP^m$ as in Theorem~\ref{specialization theorem}. There are only a few results in the literature when the error term in~\eqref{eq:height specialization} is known to be $O(1)$. Besides  Tate's \cite{tate83} and Silverman's \cite{Silverman83} in the context of elliptic curves and more generally abelian varieties (see also the further improvements of Silverman \cite{Silverman-2, Silverman-3} in the case of elliptic curves), there are only a few known results, all valid for $1$-parameter families (see \cite{ingram10, Ingram-preprint, metric, GM}). To our knowledge, Theorem~\ref{specialization theorem} is the \emph{first} result in the literature where one improves the error term in~\eqref{eq:height specialization} to $O(1)$ for a higher dimensional parameter family of endomorphisms of $\bP^m$.


\section{Notation}
\label{sec:notation}

In this section we setup the notation used in our paper.

\subsection{Maps and preperiodic points}

Let $\Phi:X\lra X$ be a self-map on some set $X$. As always in dynamics, we denote by $\Phi^n$ the $n$-th compositional iterate of $\Phi$ with iteself. We denote by $\id:=\id|_X$ the identity map on $X$.

For any quasiprojective variety $X$ endowed with an endomorphism
$\Phi$, we call a point $x\in X$ \emph{preperiodic} if there exist two
distinct nonnegative integers $m$ and $n$ such that
$\Phi^m(x)=\Phi^n(x)$. If $x=\Phi^n(x)$ for some positive integer $n$, then $x$ is a \emph{periodic} point of \emph{period} $n$. For more details, we refer the reader to the comprehensive book \cite{Silverman07} of Silverman on arithmetic dynamics.

\subsection{Absolute values on product formula fields}

A product formula field $K$  comes
equipped with a standard set $\Omega_K$ of absolute values $|\cdot|_v$ which
satisfy a product formula, i.e,

\begin{equation}
\label{product formula equation 0}
\prod_{v\in \Omega_K} |x|^{N_v}_v = 1\quad\text{ for every $x\in K^*$},
\end{equation}
where $N\colon\Omega_K\to\N$ and $N_v:=N(v)$ (see \cite{lang} for more
details).

The typical examples of product formula fields are
\begin{enumerate}
\item[(1)]  number fields; and
\item[(2)] function fields $K$ of finite transcendence degree over some field $F$.
\end{enumerate}

In the case of function fields $K$, one associates the absolute values in $\Omega_K$ to the irreducible divisors of a smooth, projective variety $\cV$ defined over the constant field $F$ such that $K$ is the function field of $\cV$; for more details, see \cite{lang} and \cite{bg06}. In the special case   $K=F(t_1,\dots, t_m)$, we may take $\cV=\bP^m$.

{\bf As a convention, in order to simplify the notation in this paper, a product formula field is always either a number field or a function field over a constant field.}

Let $K$ be a product formula field. We fix an algebraic closure $\Kbar$ of $K$; if $K$ is a function field of finite transcendence degree over another field $F$ (which we call the constant field), then we also fix an algebraic closure $\Fbar$ of $F$ inside $\Kbar$. Let $v \in \Omega_K$.
Let $\C_v$ be the completion of a fixed algebraic closure of the
completion of $(K,|\cdot |_v)$. When $v$ is an archimedean valuation,
then $\C_v=\C$.  We use the same notation $|\cdot |_v$ to denote the
extension of the absolute value of $(K,|\cdot |_v)$ to $\C_v$ and we
also fix an embedding of $\Kbar$ into $\C_v$.

\subsection{The Weil height}

Let $m\ge 1$, and let $L$ be a finite extension of the product formula field  $K$.
The (naive) Weil height $h(\cdot)$ of any point $P:=[x_m:\cdots :x_0]\in \bP^m(L)$ is defined as
$$h(P)=\frac{1}{[L:K]}\sum_{v\in\Omega_K}N_v\cdot\sum_{\substack{\sigma:L\into \Kbar\\ \sigma|_K=\id}} \log\left(\max\{|x_m|_v, \cdots, |x_0|_v\}\right).$$
So, the above inner sum is over all  possible embeddings of $L$ into $\Kbar$ which fix $K$  pointwise; also one can check that the above definition of height does not depend on the particular choice of the field $L$ containing each $x_i$. We also use the notation $h((x_m,\dots, x_1)):=h([x_m:\cdots :x_1:1])$ to denote the height of the point $(x_m\dots, x_1)$ in the affine space $\bA^m$ embedded in the usual way in $\bP^m$.

In the special case of the function field $K=F(t_1,\dots, t_\ell)$, for a point $P=[x_m:\cdots :x_0]\in\bP^m(K)$, assuming each $x_i\in F[t_1,\dots, t_\ell]$ and moreover, the polynomials $x_i$ are coprime, then $h(P)=\max_{i=0}^m \deg(x_i)$, where $\deg(\cdot )$ is the total degree function on $F[t_1,\dots, t_\ell]$.

\subsection{Canonical heights}

Let $m\ge 1$, and let $f:\PP^m\lra \PP^m$ be an endomorphism of degree
$d\ge 2$.  In \cite{Call-Silverman}, Call and Silverman defined the \emph{global canonical height} $\hhat_f(x)$ for each $x\in \PP^m(\Kbar)$ as
\begin{equation}
\label{definition canonical height function field}
\hhat_f(x)=\lim_{n\to\infty} \frac{h(f^n(x))}{d^n}.
\end{equation}
If $K$ is a number field, then using Northcott's Theorem one deduces
that $x$ is preperiodic for $f$ if and only if $\hhat_f(x)=0$. This
statement does not hold if $K$ is a function field over a constant
field $F$ (which is not a subfield of some $\overline{\mathbb{F}_p}$)
since $\hhat_f(x)=0$ for all $x\in F$ if $f$ is defined over
$F$. However, as proven by Benedetto \cite{Benedetto} and Baker
\cite{Baker_isotrivial}, this is essentially the only counterexample.

\subsection{Canonical height over function fields}

In order to state the results of Baker and Benedetto, we first define isotrivial polynomials.

\begin{defi}
\label{isotrivial}
We say a polynomial $f \in K[z]$ is \emph{isotrivial} over $F$ if there
exists a linear $\ell
\in \Kbar[z]$ such that $\ell\circ f \circ \ell^{-1} \in \Fbar[z]$.
\end{defi}

Benedetto proved that a non-isotrivial polynomial has nonzero
canonical height at its non-preperiodic points
\cite[Thm.~B]{Benedetto}.  As stated, Benedetto's result applies only
to function fields of transcendence dimension one, but the proof
extends easily to function fields of any transcendence dimension.
Baker \cite{Baker_isotrivial} later generalized the result to the case
of rational functions over arbitrary product formula fields.

\begin{lemma}[Benedetto \cite{Benedetto}, Baker \cite{Baker_isotrivial}]
\label{Benedetto}
  Let $f\in K[z]$ with $\deg(f)\ge 2$, and let
$x\in \Kbar$. If $f$ is non-isotrivial over $F$, then
$ \hhat_{f}(x) = 0$ if and only if $x$ is preperiodic for~$f$.
\end{lemma}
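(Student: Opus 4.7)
The forward implication is immediate from the functional equation $\hhat_f(f(y)) = d \cdot \hhat_f(y)$: if $f^m(x) = f^n(x)$ for some $m > n \ge 0$, then $d^m \hhat_f(x) = d^n \hhat_f(x)$, which forces $\hhat_f(x) = 0$ because $d \ge 2$. The content of the lemma is entirely in the converse.

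For the converse, the plan is to pass to local canonical heights. I would fix a finite extension $L/K$ containing $x$ together with the coefficients of $f$ and, for each $v \in \Omega_L$, define
$$\hhat_{f,v}(y) := \lim_{n \to \infty} \frac{1}{d^n} \log^+ |f^n(y)|_v.$$
A standard telescoping estimate shows this limit exists, and the crucial feature for polynomials (as opposed to general rational maps) is that $\hhat_{f,v}$ is \emph{non-negative}. One has the local-global decomposition
$$\hhat_f(x) = \frac{1}{[L:K]} \sum_{v \in \Omega_L} N_v \, \hhat_{f,v}(x),$$
so the hypothesis $\hhat_f(x) = 0$ forces $\hhat_{f,v}(x) = 0$ at \emph{every} place $v$; equivalently, the orbit $\{f^n(x)\}_{n \ge 0}$ stays $v$-adically bounded at every place simultaneously.

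Next I would invoke non-isotriviality to single out a place of bad reduction. In the number-field case any archimedean place already suffices and the conclusion follows from Northcott's theorem applied to the orbit, which has bounded height and bounded degree. In the function-field case Northcott fails (any $y \in \Fbar$ has zero height and $v$-bounded orbit for every $v$), and one must exploit non-isotriviality directly. By assumption no affine conjugate $\ell \circ f \circ \ell^{-1}$ lies in $\Fbar[z]$; after normalizing $f$ to monic centered form, some coefficient is then forced to have a pole along an irreducible divisor of the model $\cV$, and I would take $v^*$ to be the associated place.

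The main obstacle is the final local-dynamical step: showing that at $v^*$ the filled Julia set $\{y \in \C_{v^*} : \hhat_{f,v^*}(y) = 0\}$, intersected with $\Kbar$, contains only preperiodic points of $f$. The approach I would try to reconstruct is Benedetto's Newton-polygon analysis. The pole of the distinguished coefficient at $v^*$ yields an escape radius $R_{v^*}$ together with an explicit lower bound of the form $|f^n(y)|_{v^*} \ge \rho^{d^n}$ (for some $\rho > 1$) whenever $y$ leaves the associated disk, while the complementary bounded region decomposes into finitely many residue disks whose $f$-dynamics is essentially a permutation. Consequently any algebraic $y$ whose $v^*$-orbit remains trapped in this region must be preperiodic. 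Baker's extension to arbitrary product formula fields replaces the explicit Newton-polygon bookkeeping with the reduction theory of rational maps on the Berkovich projective line over $\C_{v^*}$, but the conceptual heart is the same: bad reduction at $v^*$ is strong enough to rule out infinite filled Julia sets in $\Kbar$, and that is where the real work of the proof lies.
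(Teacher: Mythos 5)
First, a remark on the comparison itself: the paper does not prove this lemma at all --- it is quoted from Benedetto and Baker, with only the remark that Benedetto's argument for transcendence degree one extends to arbitrary transcendence degree. So your proposal must be judged as a reconstruction of those proofs, and it has a genuine gap at exactly the step you yourself flag as ``the main obstacle.'' The forward implication, the decomposition of $\hhat_f$ into non-negative local canonical heights, the reduction to ``the orbit of $x$ is $v$-adically bounded at every place,'' the Northcott argument over number fields, and the use of non-isotriviality (via normal form, cf.\ Proposition~\ref{normal form proposition}) to produce a place $v^*$ where some coefficient is non-integral are all fine. But the final claim --- that at this single place the set $\{y\in\Kbar:\hhat_{f,v^*}(y)=0\}$ consists only of preperiodic points, because the trapped region splits into finitely many residue disks that are ``essentially permuted'' --- is not a valid inference and is not what Benedetto or Baker prove. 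Recurrence at the level of disks never yields preperiodicity of points: for $z\mapsto z^2$ at an odd prime $p$ the residue disk of $1$ is literally fixed, yet $1+p$ is not preperiodic; so disk-level combinatorics alone cannot do the job, and even genuine expansion on the trapped set at a bad place only shows that distinct trapped points eventually separate, not that every trapped point of $\Kbar$ is preperiodic. More structurally, your last step uses only boundedness of the orbit at the one place $v^*$ and throws away both the boundedness at all the other places and the Galois conjugates of $x$; a condition imposed at a single place is far weaker than $\hhat_f(x)=0$, and no purely local statement of the form ``bounded at one bad place $\Rightarrow$ preperiodic'' is established by the Newton-polygon picture you describe.

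The cited proofs are global precisely where your sketch is local. Benedetto combines the vanishing of the local heights at \emph{all} places for $x$ and all of its conjugates with an explicit expansion/separation estimate at the distinguished place and the product formula, to bound the number of points of sufficiently small canonical height; since every point of the orbit of $x$ has canonical height zero, the orbit is then finite and $x$ is preperiodic. Baker's extension to arbitrary product formula fields runs the same kind of adelic argument through capacities (transfinite diameters) of the filled Julia sets on the Berkovich line. In other words, the ``real work'' is the bookkeeping over all places and conjugates via the product formula, which your proposed final step omits; as written, the argument does not close.
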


A crucial observation for our paper is that a polynomial in normal form is isotrivial if and only if it is defined over the constant field; the following result is proven in \cite[Lemma~10.2]{prep}.
\begin{prop}
\label{normal form proposition}
Let $f\in K[z]$ be a polynomial in normal form. Then $f$ is isotrivial over $F$ if and only if $f\in \Fbar[z]$.
\end{prop}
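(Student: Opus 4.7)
The plan is to give a short, direct verification by conjugating $f$ by a general affine map $\ell(z)=\alpha z+\beta$ and comparing coefficients. One direction is immediate: if $f\in\Fbar[z]$, taking $\ell=\id$ shows $f$ is isotrivial. So the real content is the converse.

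Suppose $f$ is isotrivial, so $g:=\ell\circ f\circ\ell^{-1}\in\Fbar[z]$ for some linear $\ell(z)=\alpha z+\beta\in\Kbar[z]$. The first reduction I would make is to replace $\ell$ by $\ell_0\circ\ell$ for a suitable affine $\ell_0\in\Fbar[z]$ so that $g$ itself is in normal form; this is possible since any polynomial of degree $d$ over $\Fbar$ can be put in normal form by an affine conjugation defined over $\Fbar$ (solve $\gamma^{d-1}=$ leading coefficient, which has a root in $\Fbar$, then choose the shift $\delta\in\Fbar$ to kill the $z^{d-1}$ coefficient). After this reduction we may assume both $f$ and $g$ are in normal form.

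Next I would carry out the conjugation explicitly. Writing $f(z)=z^d+\sum_{k=0}^{d-2}a_k z^k$ and using $\ell^{-1}(z)=(z-\beta)/\alpha$, one gets
\[
g(z)=\frac{(z-\beta)^d}{\alpha^{d-1}}+\sum_{k=0}^{d-2}\frac{a_k(z-\beta)^k}{\alpha^{k-1}}+\beta.
\]
Comparing the $z^d$ coefficient of $g$ with $1$ yields $\alpha^{d-1}=1$, so $\alpha$ is a $(d-1)$-st root of unity, and in particular $\alpha\in\Fbar$. Comparing the $z^{d-1}$ coefficient of $g$ with $0$ yields $-d\beta=0$, and since we may assume the characteristic does not divide $d$ (this is already implicit in the definition of normal form), we conclude $\beta=0$. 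Plugging this back in, $g(z)=z^d+\sum_{k=0}^{d-2}(a_k\alpha^{1-k})z^k$, and since each coefficient lies in $\Fbar$ and $\alpha\in\Fbar$, we deduce $a_k\in\Fbar$ for every $k$. Thus $f\in\Fbar[z]$, as required.

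There is no real obstacle here; the only subtle point is ensuring we may pre-normalize $g$ (which is why we use that $\Fbar$ is algebraically closed), and noting that the constraint $\alpha^{d-1}=1$ automatically places $\alpha$ in $\Fbar$. Once both polynomials are in normal form, the matching of the two highest coefficients pins down the conjugating map so tightly that the remaining coefficients $a_k$ are forced into $\Fbar$.
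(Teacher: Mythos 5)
Your argument is correct in the setting where the paper actually uses this proposition. Note that the paper does not prove Proposition~\ref{normal form proposition}; it quotes it from \cite[Lemma~10.2]{prep}, and your coefficient-comparison argument (pre-normalize $g$ by an affine map over $\Fbar$, then read off $\alpha^{d-1}=1$ and $\beta=0$ from the two top coefficients, hence $a_k\alpha^{1-k}\in\Fbar$ and so $a_k\in\Fbar$) is essentially the standard proof of that lemma. One small correction: the assumption that the characteristic does not divide $d$ is not ``implicit in the definition of normal form''---in characteristic $p\mid d$ the statement genuinely fails, e.g.\ $f(z)=z^p+t$ over $\Fp(t)$ is in normal form and is conjugated to $z^p\in\Fbar[z]$ by $\ell(z)=z+\beta$ with $\beta^p-\beta=t$, yet $f\notin\Fbar[z]$; the hypothesis is instead supplied by the ambient setting of the paper, where $K$ has characteristic $0$ (so both your pre-normalization step and the conclusion $\beta=0$ from $-d\beta=0$ are legitimate). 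With that justification substituted, the proof is complete.
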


%
%
%
%
%
%


\section{Proof of the specialization theorem}
\label{sec:specialization}

In this section we prove Theorem~\ref{specialization theorem}. So, we work with the following setup:
\begin{itemize}
\item $d>m\ge 2$ are integers.
\item $K$ is a product formula field of characteristic $0$.
\item For algebraically independent variables $t_1,\dots, t_m$ we define
$$\bff(z):=z^d + t_1z^{m-1} + \cdots + t_{m-1}z+t_m.$$
Let $\Phi:\bP^m\lra \bP^m$ be the map on $\bP^m$  defined by
$$\quad\; \Phi([X_m:\cdots :X_1:X_0]) = \left[ X_0^d\bff\left(\frac{X_m}{X_0}\right) : \cdots :  X_0^d\bff\left(\frac{X_1}{X_0}\right): X_0^d\right].$$
It is straightforward to verify that $\Phi$ is a morphism on $\bP^m$ over $K(t_1,\dots, t_m)$.
\item When we specialize each $t_i$ to some $\l_i\in \Kbar$, we use the notation $\bff_{\bflambda}(z):=z^d + \l_1z^{m-1}+\cdots + \l_{m-1}z+\l_m$ and  $\Phi_{\bflambda}$ to denote the corresponding specialized polynomial and endomorphism of $\bP^m$ respectively, where $\bflambda = (\l_1,\ldots, \l_m)\in \bA^m(\Kbar)$.
\item Let $\bfc:=(c_m,\ldots , c_1) \in \bA^m(K)$ where the $c_i$'s are distinct. We denote the point $[c_m:\ldots : c_1: 1]\in \bP^m(K)$ by $\tilde{\bfc}$.
\item For each $\bflambda :=(\l_1,\ldots,\l_m) \in\bA^m(\Kbar)$, we let $\hhat_{\Phi_{\bflambda}}$ be the canonical height corresponding to the endomorphism $\Phi_{\bflambda}$ defined over $\Kbar$; also we let $\hhat_\Phi$ be the canonical height corresponding to the endomorphism $\Phi$ defined over the function field $K(t_1,\dots, t_m)$.
\item Let $v\in \Omega_K.$ The ($v$-adic) {\em norm} of a point  $P = [x_m:\cdots:x_1: x_0]\in \bP^m(\C_v)$ with $x_0\ne 0$ is defined by $\norm{P} := \max \{|x_m/x_0|_v, \ldots, |x_0/x_0|_v\}$. Also, for a point
$Q = (a_m, \ldots, a_1)\in \bA^m(\C_v)$, we define its norm $\norm{Q}_v := \max\{|a_m|_v, \ldots, |a_1|_v, 1\}$; it is clear that $\norm{Q}_v=\norm{\tilde{Q}}_v$, where $\tilde{Q}:=[a_m:\cdots:a_1:1]$.  For a polynomial
$$g(x_1, \ldots, x_m) = \sum\,a_{i_1, \ldots, i_m} x_1^{i_1}\cdots x_m^{i_m} \in \C_v[x_1, \ldots, x_m],$$ the norm of $g$ is defined by $\norm{g}_v : = \max_{i_1,\ldots,i_m}\{|a_{i_1,\ldots,i_m}|_v\}$. Similarly, for a morphism
    $\Psi = [\psi_m : \ldots : \psi_1: \psi_0] : \bP^m \to \bP^m$ over $\C_v$, we set $\norm{\Psi}_v := \max_i\{\norm{\psi_i}_v\}/\norm{\psi_0}_v$.
\item By abuse of the notation, we simply write $\Phi^n(\bfc)$ for $\Phi^n(\tilde{\bfc})$ and similarly, we let $\hhat_{\Phi_{\bflambda}}(\bfc)$ denote the canonical height of the point $\tilde{\bfc}$ etc.
\end{itemize}
\medskip

For each $n\ge 0$ and each $i=1,\dots, m$ we define $A_{n,i}(t_1,\dots, t_m)$ such that
$$\Phi^n(\bfc):=[A_{n,m}:\cdots :A_{n,1}:1].$$
Then $A_{0,i}=c_i$ for each $i=1,\dots, m$ and for general $n\ge 0$:
$$A_{n+1,i}(t_1,\dots, t_m):=\bff(A_{n,i}(t_1,\dots, t_m)).$$

It is easy to see that the total degree $\deg(A_{n,i})$ in the variables $t_1,\dots, t_m$ equals $d^{n-1}$; so $\hhat_\Phi(\bfc)=\frac{1}{d}$ (see \eqref{definition canonical height function field}).  Therefore \eqref{conclusion specialization theorem} reduces to proving
\begin{equation}
\label{conclusion specialization theorem 2}
\hhat_{\Phi_{\bflambda}}(\bfc)=\frac{h(\bflambda)}{d} + O(1), \quad \bflambda = (\l_1,\dots, \l_m)\in \bA^m(\Kbar).
\end{equation}

Note that by our convention mentioned above, we have
$$\norm{\Phi^n_{\bflambda}(\bfc)}_v :=\max\{1,|A_{n,1}(\l_1,\dots, \l_m)|_v,\cdots , |A_{n,m}(\l_1,\dots, \l_m)|_v\}.$$
To ease the notation, in the following discussion we simply denote $A_{n,i}(\l_1, \dots, \l_m)$ by $A_{n,i}$ when $\l_1, \dots, \l_m$ are fixed.

Using the definition of $\Phi^n(\bfc)$ which yields that each $A_{n,i}$ has total degree  $d^{n-1}$ in $\bflambda$ and also degree at most $d^n$ in $\bfc$, we obtain an upper bound for $\norm{\Phi_{\bflambda}^n(\bfc)}_v$ when $v$ is a nonarchimedean place of $K.$
\begin{equation}
\label{eqn:upper bound for phi}
\norm{\Phi^n_{\bflambda}(\bfc)}_v \le  \norm{\bflambda}^{d^{n-1}}_v \norm{\bfc}^{d^n}_v
\end{equation}
We first observe that from the definition of the $v$-adic norm of a point, we have that always the $v$-adic norm of a point is at least equal to $1$, i.e.,
\begin{equation}
\label{norm is at least 1}
\norm{\bfc}_v\ge 1\text{ and }\norm{\bflambda}_v\ge 1.
\end{equation}

Next we prove a couple of easy lemmas.
\begin{lemma}
\label{almost all v are 0}
Let $\bflambda = (\l_1,\dots, \l_m)\in\bA^m(\Kbar)$, and let $|\cdot |_v$ be a nonarchimedean absolute value such that $\norm{\bfc}_v = 1$ and $\norm{\bflambda}_v = 1$. Then $\norm{\Phi_{\bflambda}^n(\bfc)}_v=1$ for each $n\ge 0$.
\end{lemma}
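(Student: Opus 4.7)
The plan is to proceed by straightforward induction on $n$, exploiting the strong (nonarchimedean) triangle inequality together with the definition of $\bff_{\bflambda}$.

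First I would unpack the hypotheses at the level of coordinates. From the definition $\norm{\bfc}_v := \max\{|c_m|_v, \ldots, |c_1|_v, 1\}$, the assumption $\norm{\bfc}_v = 1$ is equivalent to $|c_i|_v \le 1$ for every $i=1,\dots, m$; similarly $\norm{\bflambda}_v = 1$ is equivalent to $|\l_j|_v \le 1$ for $j=1,\dots, m$. Thus the problem reduces to showing that each coordinate $A_{n,i}(\l_1,\dots, \l_m)$ of $\Phi^n_{\bflambda}(\bfc)$ satisfies $|A_{n,i}|_v \le 1$; combined with the universal lower bound $\norm{\Phi^n_{\bflambda}(\bfc)}_v \ge 1$ from \eqref{norm is at least 1}, this will give the desired equality.

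The base case $n=0$ is immediate since $A_{0,i}=c_i$. For the inductive step, assume $|A_{n,i}|_v \le 1$ for every $i$. Using the recurrence
\[
A_{n+1,i} \;=\; \bff_{\bflambda}(A_{n,i}) \;=\; A_{n,i}^d + \l_1 A_{n,i}^{m-1} + \cdots + \l_{m-1} A_{n,i} + \l_m,
\]
the ultrametric inequality together with $|\l_j|_v\le 1$ and $|A_{n,i}|_v\le 1$ yields
\[
|A_{n+1,i}|_v \;\le\; \max\bigl\{|A_{n,i}|_v^d,\, |\l_1|_v |A_{n,i}|_v^{m-1},\, \ldots,\, |\l_{m-1}|_v |A_{n,i}|_v,\, |\l_m|_v\bigr\} \;\le\; 1,
\]
closing the induction. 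Taking the maximum over $i$ gives $\norm{\Phi^{n+1}_{\bflambda}(\bfc)}_v \le 1$, and combined with \eqref{norm is at least 1} we conclude $\norm{\Phi^{n+1}_{\bflambda}(\bfc)}_v = 1$.

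There is essentially no obstacle here: the content is just the fact that good reduction of both the polynomial family and the starting point at $v$ is preserved under iteration. The step to be careful about is only making sure one correctly parses the norm convention (that $\norm{\bfc}_v$ and $\norm{\bflambda}_v$ implicitly include the coordinate $1$, so equality to $1$ really does force all $|c_i|_v, |\l_j|_v \le 1$) so that the ultrametric estimate applies uniformly.
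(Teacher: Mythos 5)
Your proof is correct and is essentially the paper's argument: the paper simply cites its already-stated upper bound \eqref{eqn:upper bound for phi} (which encodes exactly the ultrametric estimate you reprove by induction) together with the trivial lower bound \eqref{norm is at least 1}. Your coordinate-wise induction is just a self-contained unpacking of that same estimate, so there is nothing to add.
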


\begin{proof}
The result follows using~\eqref{eqn:upper bound for phi} and also that (just as for any point; see for example, \eqref{norm is at least 1})  $\norm{\Phi^n_{\bflambda}(\bfc)}_v\ge 1$ for every $n\ge 0$.
\end{proof}

\begin{lemma}
\label{almost all v work}
Let $|\cdot |_v$ be a nonarchimedean absolute value such that
\begin{itemize}
\item[(i)] $\norm{\bfc}_v =1$; and
\item[(ii)] $|c_i-c_j|_v=1$ for each $1\le i< j\le m$.
\end{itemize}
Then for each $\bflambda \in \bA^m(\Kbar)$, and for each $n\ge 1$ we have
\begin{equation}
\label{conclusion lemma almost all v work}
\norm{\Phi^n_{\bflambda}(\bfc)}_v = \norm{\bflambda}^{d^{n-1}}_v.
\end{equation}
\end{lemma}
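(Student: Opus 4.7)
\textbf{Proof plan for Lemma \ref{almost all v work}.}

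The plan is to proceed by induction on $n$. The base case $n=1$ needs to show that $\|\Phi_{\bflambda}(\bfc)\|_v = \|\bflambda\|_v$, which is the heart of the argument; the inductive step will then follow by a fairly soft comparison of term sizes using $d>m$.

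For the base case, I split on $\|\bflambda\|_v$. If $\|\bflambda\|_v = 1$, the conclusion follows from Lemma \ref{almost all v are 0}. Otherwise $\|\bflambda\|_v = L > 1$, and I must establish that $\max_i|\bff_{\bflambda}(c_i)|_v = L$. The upper bound is immediate from $|c_i|_v \le 1$ and the non-archimedean inequality, since each summand of $\bff_{\bflambda}(c_i) = c_i^d + \lambda_1 c_i^{m-1} + \cdots + \lambda_m$ has $v$-adic absolute value at most $L$. For the lower bound, the key idea is to use a Vandermonde matrix argument. Setting $q(x) := \lambda_1 x^{m-1} + \cdots + \lambda_{m-1} x + \lambda_m$, we have
\[
\begin{pmatrix} q(c_1) \\ \vdots \\ q(c_m) \end{pmatrix} = V \begin{pmatrix} \lambda_m \\ \vdots \\ \lambda_1 \end{pmatrix}, \qquad V_{ij} = c_i^{j-1}.
\]
All entries of $V$ satisfy $|c_i^{j-1}|_v \le 1$, while hypothesis (ii) yields $|\det V|_v = \prod_{i<j}|c_j - c_i|_v = 1$. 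By Cramer's rule, every entry of $V^{-1}$ is an $(m-1)\times(m-1)$ minor of $V$ divided by $\det V$, hence has $v$-adic absolute value at most $1$. Therefore $L = \max_j |\lambda_j|_v \le \max_i |q(c_i)|_v$. Since $|c_i^d|_v \le 1 < L$, the non-archimedean inequality gives $|\bff_{\bflambda}(c_i)|_v = |q(c_i)|_v$ at an $i$ achieving this maximum, whence $\max_i|\bff_{\bflambda}(c_i)|_v \ge L$.

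For the inductive step, assume $\|\Phi^n_{\bflambda}(\bfc)\|_v = \|\bflambda\|_v^{d^{n-1}} =: M_n$. If $\|\bflambda\|_v = 1$ the conclusion is Lemma \ref{almost all v are 0}, so assume $\|\bflambda\|_v > 1$, so $M_n > 1$. Then $\max_i |A_{n,i}|_v = M_n$, attained at some index $i_0$, and $|A_{n,i}|_v \le M_n$ for all $i$. The leading term in $\bff_{\bflambda}(A_{n,i_0}) = A_{n,i_0}^d + \lambda_1 A_{n,i_0}^{m-1} + \cdots + \lambda_m$ has size $M_n^d = \|\bflambda\|_v^{d^n}$, while every other summand is bounded by $\|\bflambda\|_v \cdot M_n^{m-1} = \|\bflambda\|_v^{1 + d^{n-1}(m-1)}$. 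The inequality $d^n > 1 + d^{n-1}(m-1)$, which reduces to $d^{n-1}(d - m + 1) > 1$, holds because $d > m \ge 2$ forces $d - m + 1 \ge 2$. Hence the $A_{n,i_0}^d$ term strictly dominates by the ultrametric inequality, giving $|\bff_{\bflambda}(A_{n,i_0})|_v = \|\bflambda\|_v^{d^n}$, and the same size comparison bounds every other $|\bff_{\bflambda}(A_{n,i})|_v$ by $\|\bflambda\|_v^{d^n}$. This completes the induction.

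The main obstacle is the base case: the upper bound on $\|\Phi_{\bflambda}(\bfc)\|_v$ is automatic from the non-archimedean inequality, but the matching lower bound must extract information about the size of the parameters $\bflambda$ from the values $\bff_{\bflambda}(c_i)$. This is precisely what the Vandermonde argument accomplishes, and it is exactly where the hypotheses $|c_i|_v \le 1$ and $|c_i - c_j|_v = 1$ are both essential—together they force $V$ and $V^{-1}$ to have $v$-integral entries.
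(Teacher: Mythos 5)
Your proof is correct and follows essentially the same route as the paper: the base case is handled by the Vandermonde/Cramer argument (the paper solves the same linear system \eqref{determinant 1}, whose determinant is a $v$-adic unit by hypothesis (ii)), and the inductive step uses exactly the paper's observation that the $A_{n,i}^d$ term dominates the terms $\lambda_j A_{n,i}^{m-j}$ because $d>m$ and $\norm{\bflambda}_v>1$.
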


\begin{proof}
First, we note by Lemma~\ref{almost all v are 0} (see condition~(i) above) that if $\norm{\bflambda}_v= 1$, then $\norm{\Phi^n_{\bflambda}(\bfc)}_v=1$ as claimed in the above conclusion. So, from now on, we assume that $\norm{\bflambda}_v >1$. Hence
$|\l_i|_v>1$ for some $i=1,\dots, m$.

For $n=1$, we note that
\begin{equation}
\label{determinant 1}
\left\{\begin{array}{ccc}
c_m^{m-1}\l_1+ \cdots + c_m\l_{m-1} + \l_m & = & A_{1,m}-c_m^d\\
\cdots \cdots \cdots \cdots \cdots & \cdots & \cdots \cdots \\
c_1^{m-1}\l_1 + \cdots + c_1\l_{m-1} + \l_m & = & A_{1,1} - c_1^d
\end{array} \right.
\end{equation}
seen as a system with unknowns $\l_1,\dots, \l_m$ has the determinant equal with a van der Monde   determinant which is a $v$-adic unit (see condition~(ii) above). Therefore,  using also that $|c_i|_v\le 1$, we get that
$$|\l_i|_v \le \max\{1, |A_{1,1}|_v,\dots , |A_{1,m}|_v\}\quad i = 1, \ldots, m. $$
Thus, $\norm{\bflambda}_v \le \norm{\Phi_{\bflambda}(\bfc)}_v$. Combining this last inequality  with~\eqref{eqn:upper bound for phi} for $n=1$,
we conclude that $\norm{\Phi_{\bflambda}(\bfc)}_v = \norm{\bflambda}_v.$

Now, for $n>1$, we argue by induction on $n$. So, assume \eqref{conclusion lemma almost all v work} holds for $n=k\ge 1$, and we prove the same equality holds for $n=k+1$. By induction hypothesis and using the fact that $d>m$, we have that for each $j=0,\dots, m-1$,
\begin{equation}
\label{less valuation k+1}
|A_{k,i}^j\l_{m-j}|_v\le \norm{\bflambda}_v^{j\cdot d^{k-1}} \, \norm{\bflambda}_v < \norm{\bflambda}_v^{d^k}.
\end{equation}
For the last inequality we also use the fact that $d>m>j$ and that $\norm{\bflambda}_v>1$.  So,
$$|A_{k+1,i}|_v\le \max\left\{|A_{k,i}|_v^d , \max_{j=0}^{m-1} |A_{k,i}^j \l_{m-j}|_v\right\}\le \norm{\bflambda}_v^{d^k}.$$
On the other hand, since $\norm{\Phi_{\bflambda}^k(\bfc)}_v = \norm{\bflambda}^{d^{k-1}}_v > 1$ by the induction hypothesis, there exists some $i=1,\dots, m$ such that $|A_{k,i}|_v=\norm{\bflambda}_v^{d^{k-1}}$ and so, for that index $i$ (using \eqref{less valuation k+1}), we have $|A_{k+1,i}|_v=|A_{k,i}|_v^d=\norm{\bflambda}_v^{d^{k}}$, as claimed.
\end{proof}

Let $S\subset \Omega_K$  consist of all the archimedean places of $K$ and all the places $v$ which do \emph{not} satisfy at least one of the two conditions (i) and (ii) from Lemma~\ref{almost all v work}.
It is clear that the set $S$ is finite (note that $c_i\ne c_j$ for $i\ne j$ and thus condition~(ii) from Lemma~\ref{almost all v work} is satisfied by all but finitely many places $v$).

\begin{lemma}
\label{reduction to finitely many places}
Let $\bflambda = (\l_1,\dots, \l_m) \in\bA^m(\Kbar)$, and let $L$ be a finite, normal extension of $K$ containing $\l_1,\dots, \l_m$. Then we have
\begin{gather*}
\hhat_{\Phi_{\bflambda}}(\bfc) - \hhat_\Phi(\bfc)\cdot h(\Phi_{\bflambda}(\bfc)) \\
  = \sum_{v\in S}\frac{N_v}{[L:K]}\cdot \sum_{\sigma\in\Gal(L/K)} \left(\lim_{n\to\infty} \frac{\log \norm{\Phi_{\sigma(\bflambda)}^n(\bfc)}_v}{d^n} - \frac{\log \norm{\Phi_{\sigma(\bflambda)}(\bfc)}_v}{d}\right).
  \end{gather*}
\end{lemma}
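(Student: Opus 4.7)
The plan is to expand the Weil height as a sum of local contributions, then use Lemma~\ref{almost all v work} to see that everything outside the finite set $S$ cancels identically in $n$, so that the global canonical height at $\bflambda$ reduces to a finite sum of local limits. First, since $\bfc\in \bA^m(K)$, for every $\sigma\in \Gal(L/K)$ we have $\sigma(\Phi_{\bflambda}^n(\bfc)) = \Phi_{\sigma(\bflambda)}^n(\bfc)$, and the usual place-by-place decomposition of the Weil height of a point in $\bP^m(L)$ gives
$$h(\Phi_{\bflambda}^n(\bfc)) = \frac{1}{[L:K]}\sum_{v\in\Omega_K} N_v \sum_{\sigma\in\Gal(L/K)} \log\|\Phi_{\sigma(\bflambda)}^n(\bfc)\|_v.$$
I will also use the computation $\hhat_\Phi(\bfc) = 1/d$ that was carried out earlier in this section from $\deg_{\bft}(A_{n,i}) = d^{n-1}$, so that the left-hand side of the lemma equals $\hhat_{\Phi_{\bflambda}}(\bfc) - h(\Phi_{\bflambda}(\bfc))/d$.

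Next, I would fix $n\ge 1$ and consider the finite-stage difference
$$D_n := \frac{h(\Phi_{\bflambda}^n(\bfc))}{d^n} - \frac{h(\Phi_{\bflambda}(\bfc))}{d},$$
splitting the resulting sum over $\Omega_K$ into the contributions from $v\in S$ and from $v\notin S$. For any nonarchimedean $v\notin S$, the hypotheses $\|\bfc\|_v = 1$ and $|c_i-c_j|_v=1$ both hold, so Lemma~\ref{almost all v work} applied to $\sigma(\bflambda)$ gives $\log\|\Phi_{\sigma(\bflambda)}^n(\bfc)\|_v = d^{n-1}\log\|\sigma(\bflambda)\|_v$ for every $n\ge 1$ and every $\sigma$. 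Consequently, both $\log\|\Phi_{\sigma(\bflambda)}^n(\bfc)\|_v/d^n$ and $\log\|\Phi_{\sigma(\bflambda)}(\bfc)\|_v/d$ reduce to the same quantity $\log\|\sigma(\bflambda)\|_v/d$, so the inner parenthesis vanishes identically in $n$. Therefore $D_n$ collapses to a sum over $v\in S$ alone, namely
$$D_n = \sum_{v\in S}\frac{N_v}{[L:K]}\sum_{\sigma\in\Gal(L/K)}\left(\frac{\log\|\Phi_{\sigma(\bflambda)}^n(\bfc)\|_v}{d^n}-\frac{\log\|\Phi_{\sigma(\bflambda)}(\bfc)\|_v}{d}\right).$$

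The last step is to let $n\to\infty$. On the left, $D_n\to \hhat_{\Phi_{\bflambda}}(\bfc)-h(\Phi_{\bflambda}(\bfc))/d = \hhat_{\Phi_{\bflambda}}(\bfc)-\hhat_\Phi(\bfc)\cdot h(\Phi_{\bflambda}(\bfc))$ by the definition of the global canonical height. On the right, $S$ and $\Gal(L/K)$ are both finite, so interchanging the limit with the sums is permitted as soon as each local limit $\lim_{n\to\infty} \log\|\Phi_{\sigma(\bflambda)}^n(\bfc)\|_v/d^n$ exists; this is the standard local-canonical-height construction for the morphism $\Phi_{\sigma(\bflambda)}$ of $\bP^m$ over $\C_v$, obtained from a two-sided estimate of the form $c_v\|P\|_v^d\le \|\Phi_{\sigma(\bflambda)}(P)\|_v\le C_v\|P\|_v^d$ together with the Tate-style telescoping argument of Call--Silverman~\cite{Call-Silverman}. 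The only real content is Lemma~\ref{almost all v work}, which is already proved; the main technical point here is simply organizational, ensuring the cancellation at $v\notin S$ is uniform in $n$ so that the swap of $\lim$ and $\sum$ is legitimate, and this uniformity is exactly what the explicit formula $\log\|\Phi_{\sigma(\bflambda)}^n(\bfc)\|_v = d^{n-1}\log\|\sigma(\bflambda)\|_v$ supplies.
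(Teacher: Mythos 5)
Your proposal is correct and follows essentially the same route as the paper: expand the Weil height place by place over $\Gal(L/K)$, use Lemma~\ref{almost all v work} (applied to each $\sigma(\bflambda)$) to see that the contributions at every $v\notin S$ cancel identically in $n$, and then pass to the limit using $\hhat_\Phi(\bfc)=1/d$. The only difference is cosmetic: you explicitly justify the existence of the local limits at the finitely many $v\in S$ via the standard Tate/Call--Silverman telescoping estimate, a point the paper handles implicitly (its interchange of limit and sum is instead organized via Lemma~\ref{almost all v are 0}, with the needed Cauchy property at bad places supplied later by Lemma~\ref{analysis of bad places}).
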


\begin{proof}
We have
$$\hhat_{\Phi_{\bflambda}}(\bfc) = \lim_{n\to\infty}\sum_{v\in\Omega_K}\frac{N_v}{[L:K]}\cdot \sum_{\sigma\in \Gal(L/K)}\frac{\log \norm{\Phi_{\sigma(\bflambda)}^n(\bfc)}_v}{d^n}.$$
Lemma~\ref{almost all v are 0} yields that for all but finitely many absolute values $|\cdot |_v$ we have that $\norm{\Phi_{\sigma(\bflambda)}^n(\bfc)}_v=1$ for each $\sigma\in\Gal(L/K)$. So, we can interchange the above limit with the sum formula and get
$$\hhat_{\Phi_{\bflambda}}(\bfc)=\sum_{v\in \Omega_K} \frac{N_v}{[L:K]}\cdot \sum_{\sigma\in\Gal(L/K)}\lim_{n\to\infty} \frac{\log \norm{\Phi_{\sigma(\bflambda)}^n(\bfc)}_v}{d^n}.$$
Lemma~\ref{almost all v work} finishes the proof of Lemma~\ref{reduction to finitely many places}
\end{proof}

The next result is the key technical step which allows us to deal with the \emph{potentially bad} places $v\in S$ for proving \eqref{conclusion specialization theorem 2}.

\begin{lemma}
\label{analysis of bad places}
Let $v\in S$. There exists a constant $C(v,\bfc)$ depending only on the absolute value $|\cdot |_v$ and on the point $\bfc$ such that for each $\bflambda = (\l_1,\dots, \l_m)\in\bA^m(\Kbar)$, and for each positive integers $n_2>n_1$, we have
$$\left|\frac{\log \norm{\Phi_{\bflambda}^{n_2}(\bfc)}_v}{d^{n_2}} - \frac{\log \norm{\Phi_{\bflambda}^{n_1}(\bfc)}_v}{d^{n_1}}\right| < \frac{C(v,\bfc)}{d^{n_1}}.$$
\end{lemma}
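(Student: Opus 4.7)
The plan is to reduce the lemma, via telescoping, to the single-step bound $|\log S_{n+1} - d\log S_n| \le C(v,\bfc)$ for all $n \ge 1$, where $S_n := \norm{\Phi_{\bflambda}^n(\bfc)}_v$. Granting this, we get immediately
\[
\left|\frac{\log S_{n_2}}{d^{n_2}} - \frac{\log S_{n_1}}{d^{n_1}}\right|
\le \sum_{n = n_1}^{n_2 - 1} \frac{|\log S_{n+1} - d\log S_n|}{d^{n+1}}
\le \frac{C(v,\bfc)}{(d-1)\, d^{n_1}},
\]
which is the required estimate (after renaming the constant).

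The crucial observation is that $\Phi = \bff_{\bflambda}\times\cdots\times\bff_{\bflambda}$ acts coordinate-wise, so $A_{n+1,i} = \bff_{\bflambda}(A_{n,i})$. A direct estimate on $\bff_{\bflambda}(z)-z^d$ produces an \emph{escape threshold} $R_v(\bflambda) := \max\bigl(2,\,(4m\|\bflambda\|_v)^{1/(d-m+1)}\bigr)$ with the property that whenever $|z|_v\ge R_v(\bflambda)$, the perturbation $|\l_1 z^{m-1}+\cdots+\l_m|_v$ is bounded by $m\|\bflambda\|_v|z|_v^{m-1}\le \tfrac14|z|_v^d$. Consequently $|\bff_{\bflambda}(z)|_v = |z|_v^d$ in the non-archimedean case (by ultrametricity, since $|z|_v^d$ strictly dominates every other term), and $\tfrac34 |z|_v^d \le |\bff_{\bflambda}(z)|_v \le \tfrac54 |z|_v^d$ in the archimedean case.

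The main obstacle is that $R_v(\bflambda)$ depends on $\|\bflambda\|_v$, which is unbounded. I would overcome this by invoking a Van der Monde inversion, already present in Lemma~\ref{almost all v work}: since the $c_i$ are distinct, the linear system $A_{1,i}-c_i^d=\sum_{j=1}^m \l_j c_i^{m-j}$ can be solved for $\bflambda$, yielding
\[
\|\bflambda\|_v \le F(v,\bfc)\cdot(1 + S_1),
\]
where $F(v,\bfc)$ depends only on $v$ and $\bfc$ (through the inverse of the matrix with entries $c_i^{m-j}$, whose determinant is $\prod_{i<j}(c_j-c_i)\ne 0$).

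Using this bound I would split into two cases depending on a threshold $K_3=K_3(v,\bfc)$ chosen so that $S_1\ge K_3$ forces $S_1\ge R_v(\bflambda)$ (the constraint reduces to $K_3^{d-m}$ exceeding an explicit constant built from $F$ and $m$). In \textbf{Case A} ($S_1\ge K_3$), the coordinate achieving the maximum $S_1$ is already in the escape regime at step~$1$; an induction on $n\ge 1$ shows $S_n\ge R_v(\bflambda)\ge 2$ for all such $n$ (the implication $\tfrac34 R_v^d\ge R_v$ propagates the escape property). Combining the upper bound on $\bff_{\bflambda}$ applied to every coordinate and the lower bound applied to the dominant one yields $S_{n+1}\in[\tfrac34 S_n^d,\tfrac54 S_n^d]$, hence $|\log S_{n+1}-d\log S_n|\le\log(4/3)$. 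In \textbf{Case B} ($S_1<K_3$), the Van der Monde bound forces $\|\bflambda\|_v\le F(v,\bfc)(1+K_3)$, so $R_v(\bflambda)$ is bounded by a constant $R^*=R^*(v,\bfc)$. A further subdivision on whether $S_n\le R^*$ (in which case both $S_n$ and $S_{n+1}$ are bounded by constants depending only on $v,\bfc$) or $S_n>R^*$ (in which case the escape estimate from Case~A applies uniformly) yields the required bound $|\log S_{n+1}-d\log S_n|\le C(v,\bfc)$ in all situations, completing the proof.
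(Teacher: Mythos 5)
Your argument is correct and follows essentially the same route as the paper: a telescoping reduction to a uniform two-sided bound on $\norm{\Phi_{\bflambda}^{n+1}(\bfc)}_v/\norm{\Phi_{\bflambda}^{n}(\bfc)}_v^d$, obtained by combining escape-rate estimates for the dominant coordinate with the Vandermonde inversion of the linear system expressing $\bflambda$ in terms of $\Phi_{\bflambda}(\bfc)$ (which is exactly where the distinctness of the $c_i$ enters, as in the paper's \eqref{solving for l_i}). The only cosmetic difference is that you split cases according to the size of $\norm{\Phi_{\bflambda}(\bfc)}_v$ against a threshold depending on $v,\bfc$ and carry an explicit escape radius $R_v(\bflambda)$, whereas the paper splits on $\norm{\bflambda}_v$ itself and inductively maintains $M_n\ge \norm{\bflambda}_v/L_0^3$; the two organizations are equivalent via the same inversion.
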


\begin{proof}
Since we will fix the place $v$ and $\l_1,\dots,\l_m$ and $|\cdot |_v$,  we  simply denote $M_n:=\norm{\Phi_{\bflambda}^n(\bfc)}_v$. Similarly, as stated above,  we will use the notation $A_{n,i}:=A_{n,i}(\l_1,\dots, \l_m)$ for $i=1,\dots, m$.
We split the analysis based on whether there exists at least one $\l_i$ with large absolute value or not.
\begin{claim}
\label{again in a finite ball}
Let $L$ be any real number larger than $1$. If $\norm{\bflambda}_v\le L$, then
$$\frac{1}{2^dL^d}\le \frac{ M_{n+1}}{M_n^d}\le (m+1)L,$$
for all $n\ge 1$.
\end{claim}

\begin{proof}[Proof of Claim~\ref{again in a finite ball}.]
Now, by definition, $M_n\ge 1$; so,  using also that $L>1$,  we get that for each $i=1,\dots, m$ we have
$$|A_{n+1,i}|_v\le |A_n|^d_v+\sum_{i=1}^m |\l_i|_v\cdot |A_n|^{m-i}_v\le (m+1)L\cdot M_n^d.$$
Thus $M_{n+1} = \max\{1, |A_{n+1,1}|_v, \ldots, |A_{n+1,m}|_v\} \le  (m+1)L\cdot M_n^d.$
This proves the existence of the upper bound in Claim~\ref{again in a finite ball}.

For the proof of the existence of the lower bound, we split our analysis into two cases:

{\bf Case 1.} $M_n\le 2L$.

In this case, using that $M_{n+1}\ge 1$, we immediately obtain that
\begin{equation}
\label{1}
\frac{M_{n+1}}{M_n^d}\ge \frac{1}{2^dL^d}.
\end{equation}

{\bf Case 2.} $M_n>2L$.

Let $j\in\{1,\dots, m\}$ such that $|A_{n,j}|_v=M_n$; then
\begin{align*}
 |A_{n+1,j}|_v & = |A_{n,j}^d+\sum_{i=1}^m A_{n,j}^{m-i}\l_i|_v\\
& \ge |A_{n,j}|_v^d - \sum_{i=1}^m |A_{n,j}|_v^{m-i}\cdot |\l_i|_v\\
& \ge |A_{n,j}|_v^d\cdot \left( 1- \sum_{i=1}^m \frac{|\l_i|_v}{|A_{n,j}|_v^{d-m+i}} \right)\\
& \ge M_n^d\cdot \left( 1- \sum_{i=1}^m \frac{L}{M_n^{i+1}}\right)\; \text{since $\norm{\bflambda}_v \le L, |A_{n,j}|_v = M_n$ and $m < d$,}\\
& \ge M_n^d\cdot \left( 1- \frac{1}{2}\right)\; \text{since $M_n > 2 L$,}\\
& \ge \frac{1}{2}\cdot M_n^d.
\end{align*}

Since $M_{n+1}\ge |A_{n+1,j}|_v$, the above inequality coupled with inequality~\eqref{1} yields the lower bound from the conclusion of Claim~\ref{again in a finite ball}.
\end{proof}

We continue the proof of Lemma~\ref{analysis of bad places}. We solve for the $\l_i$'s in terms of the $A_{1,i}$'s from the system \eqref{determinant 1} and obtain that for each $k=1,\dots, m$ we have
\begin{equation}
\label{solving for l_i}
\l_k = \frac{Q_{k,0}(c_1,\dots, c_m)+\sum_{i=1}^m Q_{k,i}(c_1,\dots, c_m)\cdot A_{1,i}}{\prod_{1\le i<j\le m}(c_i-c_j)},
\end{equation}
where each $Q_{k,i}(X_1,\dots, X_m)$ is a polynomial of degree at most $d$ in each variable $X_i$.

Let $L_0$ be a real number satisfying the following inequalities:
\begin{itemize}
\item[(1)] $L_0\ge (m+1)(d+1)^m\cdot \norm{\bfc}_v^{dm}$;
\item[(2)] $L_0$ is larger than the $v$-adic absolute value of each coefficient of each $Q_{k,i}$, for $k=1,\dots, m$ and  $i=0,\dots, m$;
\item[(3)] $L_0\ge \frac{1}{ {\displaystyle \prod_{1\le i<j\le m}|c_i-c_j|_v}}$.
\end{itemize}

\begin{claim}
\label{outside the finite ball}
Let $L\ge 4L_0^{6}$ be a real number. Then for each $\bflambda=(\l_1,\dots, \l_m)\in\bA^m(\C_v)$ such that  $\norm{\bflambda}_v>L$, we have
$$\frac{1}{2}\le \frac{M_{n+1}}{M_n^d}\le 2,$$
for each $n\ge 1$.
\end{claim}

\begin{proof}[Proof of Claim~\ref{outside the finite ball}.]
First we prove that $M_1\ge \frac{\norm{\bflambda}_v}{L_0^3}$. Note that by our choice of $L_0$, the triangle inequality gives  $$|Q_{k,i}(c_1,\dots,c_m)|_v\le L_0 (d+1)^m \|\bfc\|_v^{dm}.$$
Using \eqref{solving for l_i} (coupled with inequalities (1)-(3) for $L_0$), we get that
\begin{equation}
\label{l_i compared with A_i}
|\l_k|_v \le \frac{(m+1)L_0\cdot (d+1)^m\cdot\|\bfc\|^{dm}\cdot M_1}{\prod_{1\le i<j \le m}\,|c_i - c_j|_v}\le L_0^3M_1.
\end{equation}
Thus, $\norm{\bflambda}_v  = \max\{1, |\l_1|_v, \ldots, |\l_m|_v\} \le L_0^3 M_1$ as first claimed.

We prove by induction that for each $n\ge 1$, we have $M_n\ge \frac{\norm{\bflambda}_v}{L_0^3}$.
We already established the inequality for $n=1$. Now, assume $M_n\ge \frac{\norm{\bflambda}_v}{L_0^3}$ and we show next that
\begin{equation}
\label{5}
M_{n+1}\ge \frac{M_n^d}{2}\ge \frac{\norm{\bflambda}_v}{L_0^3}>2.
\end{equation}
Note that the last inequality from \eqref{5} follows from the fact that  $\norm{\bflambda}_v>L\ge 4L_0^6>2L_0^3$.
Without loss of generality, we may assume $|A_{n,i}|_v=M_n$ for some $i$, and so by the induction hypothesis $|A_{n,i}|_v = M_n \ge \frac{\norm{\bflambda}_v}{L_0^3}\ge 2$. Now,
\begin{align*}
M_{n+1} & \ge |A_{n+1,i}|_v \\
& \ge |A_{n,i}|_v^d - \sum_{j=1}^m |\l_j|_v\cdot |A_{n,i}|^{m-j}_v  \\
& \ge |A_{n,i}|_v^d\cdot \left(1 - \sum_{j=1}^m \frac{|\l_j|_v}{|A_{n,i}|_v^{d-m+j}}\right)\\
& \ge |A_{n,i}|_v^d\cdot \left(1 - \frac{2\norm{\bflambda}_v}{|A_{n,i}|_v^2}\sum_{j=1}^m \frac{|A_{n,i}|_v^2}{2 |A_{n,i}|_v^{d-m+j}}\right) \;\text{since $|\l_i|_v \le \norm{\bflambda}_v$,} \\
& \ge |A_{n,i}|_v^d \cdot \left(1 - \frac{2\norm{\bflambda}_v}{|A_{n,i}|_v^2}\right)\text{ since $|A_{n,i}|_v \ge 2$,}\\
& \ge |A_{n,i}|_v^d \cdot \left(1- \frac{2L_0^6}{\norm{\bflambda}_v}\right)\text{ by the induction hypothesis,}\\
& \ge |A_{n,i}|_v^d\cdot \left(1- \frac{2L_0^6}{L}\right)\text{ by the hypothesis of Claim~\ref{outside the finite ball}}\\
& \ge \frac{|A_{n,i}|_v^d}{2}=\frac{M_n^d}{2}\\
& \ge \frac{\norm{\bflambda}_v^d}{2L_0^{3d}}\\
& \ge \frac{\norm{\bflambda}_v}{L_0^3}\;\; \text{because $\norm{\bflambda}_v>L\ge 4L_0^6$.}
\end{align*}
 We note that the first inequality from \eqref{5} already yields the lower bound from the conclusion of Lemma~\ref{outside the finite ball}.

Next we prove that for all $n\ge 1$, we have
\begin{equation}
\label{4}
\frac{M_{n+1}}{M_n^d}\le 2.
\end{equation}
Again, without loss of generality, we may assume $|A_{n+1,i}|_v=M_{n+1}$. Then using inequality \eqref{5}, we get
\begin{align*}
M_{n+1}=|A_{n+1,i}|_v & \le |A_{n,i}|_v^d + \sum_{j=1}^m|\l_j|_v\cdot |A_{n,i}|^{m-j}_v \\
& \le M_n^d + \sum_{j=1}^m |\l_j|_v\cdot M_n^{m-j}\\
& \le M_n^d\cdot \left(1 + \sum_{j=1}^m \frac{|\l_j|_v}{M_n^{d-m+j}} \right)\\
& \le M_n^d\cdot \left(1 +\frac{2\norm{\bflambda}_v}{M_n^2}\right)\quad \text{ because $M_n\ge 2$}\\
& \le M_n^d\cdot \left(1 + \frac{2L_0^6}{\norm{\bflambda}_v}\right)\quad \text{by induction hypothesis}\\
& \le M_n^d\cdot \left(1+\frac{2L_0^6}{L}\right)\quad \text{since $\norm{\bflambda}_v > L$}\\
& \le 2M_n^d \quad\text{since $L \ge 4 L_0^6$}.
\end{align*}
This concludes the proof of Lemma~\ref{outside the finite ball}.
\end{proof}

Claims~\ref{again in a finite ball} and \ref{outside the finite ball} yield that there exists a constant $C>1$ (depending on $v$ and $\bfc$) such that
\begin{equation}
\label{almost telescoping}
\frac{1}{C}\le \frac{M_{n+1}}{M_n^d}\le C,
\end{equation}
for each $n\ge 1$.
An easy  telescoping sum after taking the logarithm of the inequalities from \eqref{almost telescoping} finishes the proof of Lemma~\ref{analysis of bad places}.
\end{proof}

An immediate corollary of Lemma~\ref{analysis of bad places} (for $n_1=1$) is the following result.
\begin{lemma}
\label{analysis of bad places 1}
Let $v\in S$. There exists a constant $C(v,\bfc)$ depending only on the absolute value $|\cdot |_v$ and on the point $\bfc$ such that for each $\bflambda = (\l_1,\dots, \l_m)\in\bA^m(\Kbar)$, and for each positive integers $n$, we have
$$\left|\lim_{n\to\infty}\frac{\log \norm{\Phi_{\bflambda}^n(\bfc)}_v}{d^{n}} - \frac{\log \norm{\Phi_{\bflambda}(\bfc)}_v}{d}\right| < \frac{C(v,\bfc)}{d}.$$
\end{lemma}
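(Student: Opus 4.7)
The plan is to obtain this as a direct corollary of Lemma~\ref{analysis of bad places}, with essentially no new work required. Setting $n_1 = 1$ and letting $n_2 = n$ be an arbitrary integer $\ge 2$ in Lemma~\ref{analysis of bad places} gives immediately
$$\left|\frac{\log \norm{\Phi_{\bflambda}^{n}(\bfc)}_v}{d^{n}} - \frac{\log \norm{\Phi_{\bflambda}(\bfc)}_v}{d}\right| < \frac{C(v,\bfc)}{d},$$
uniformly in $n$, with the same constant $C(v,\bfc)$ appearing in that lemma.

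Next, I would justify that the limit $\lim_{n\to\infty} \log \norm{\Phi_{\bflambda}^n(\bfc)}_v / d^n$ actually exists. This follows from Lemma~\ref{analysis of bad places} itself, applied to arbitrary pairs $n_2 > n_1$: the estimate there shows that the sequence $\{ \log\norm{\Phi^n_{\bflambda}(\bfc)}_v / d^n \}_{n \ge 1}$ is Cauchy in $\R$, hence convergent. (This is nothing more than the standard construction of the $v$-adic local contribution to the canonical height of $\bfc$ under the morphism $\Phi_{\bflambda}$ of $\bP^m$.)

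Finally, I would let $n \to \infty$ in the displayed inequality above. By continuity of the absolute value, the left-hand side converges to the quantity appearing in the statement of Lemma~\ref{analysis of bad places 1}, and it remains bounded by $C(v,\bfc)/d$. In principle, the strict inequality becomes a weak inequality upon passing to the limit, but one may restore strictness by replacing $C(v,\bfc)$ with, say, $2 C(v,\bfc)$, without altering its dependence on $v$ and $\bfc$ alone. No genuine obstacle arises here; all the substantive estimation has already been carried out in the telescoping argument of Lemma~\ref{analysis of bad places}, and the present lemma is simply the specialization $n_1 = 1$ of that statement combined with passage to the limit in $n_2$.
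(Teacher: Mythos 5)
Your proposal is correct and matches the paper's treatment: the paper states this lemma as an immediate corollary of Lemma~\ref{analysis of bad places} with $n_1=1$, exactly as you do, with the Cauchy/limit passage and the harmless adjustment of the constant left implicit. No issues.
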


The next result is an easy consequence of the height machine.

\begin{lemma}
\label{reduction to M_1}
There exists a constant $C(\bfc)$ depending only on the point $\bfc$ such that
$$\left|h\left(\Phi_{\bflambda}(\bfc)\right)  - h(\bflambda)\right|\le C(\bfc),$$
for each $\bflambda \in\bA^m(\Kbar)$.
\end{lemma}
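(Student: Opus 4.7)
My plan is to observe that the assignment $\bflambda \mapsto \Phi_\bflambda(\bfc)$ is an \emph{affine-linear isomorphism} of $\bA^m$, so the two heights differ only by a bounded amount coming from the Weil height machine. Concretely, from the defining system \eqref{determinant 1} we have
\[
A_{1,i} = c_i^d + \sum_{k=1}^{m} \l_k\, c_i^{m-k}\qquad (i=1,\dots,m),
\]
so the map $\bflambda \mapsto (A_{1,1},\dots,A_{1,m})$ has the form $\bflambda \mapsto M\bflambda + \bfv$, where $M$ is the Vandermonde matrix built from $c_1,\dots,c_m$ and $\bfv=(c_1^d,\dots,c_m^d)$. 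Because the $c_i$ are distinct, $\det(M)=\pm\prod_{i<j}(c_i-c_j)\neq 0$, so this map is invertible with an inverse again affine-linear and depending only on $\bfc$ (this is precisely the content of \eqref{solving for l_i}). Thus we obtain a linear automorphism of $\bP^m$ which sends $[\l_1:\cdots:\l_m:1]$ to $\Phi_\bflambda(\bfc)$.

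For the upper bound, I would use the triangle inequality at each $v\in\Omega_K$: for every embedding $\sigma$ into $\Kbar$,
\[
\|\Phi_{\sigma(\bflambda)}(\bfc)\|_v\;\le\;C_{1,v}(\bfc)\cdot\|\sigma(\bflambda)\|_v,
\]
where $C_{1,v}(\bfc)$ depends only on $\bfc$ and equals $1$ whenever $v$ is nonarchimedean with $\|\bfc\|_v=1$. Taking $\log$, weighting by $N_v/[L:K]$, averaging over Galois conjugates, and summing over $v$ (all but finitely many terms vanishing), one gets $h(\Phi_\bflambda(\bfc)) \le h(\bflambda)+C_1(\bfc)$.

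For the lower bound, I would apply the same triangle-inequality argument to the inverse map \eqref{solving for l_i}: each $\l_k$ is an explicit $\Kbar$-linear combination of $1, A_{1,1},\dots, A_{1,m}$ with coefficients controlled by $\bfc$ and by $\prod_{i<j}(c_i-c_j)^{-1}$. This yields
\[
\|\sigma(\bflambda)\|_v\;\le\;C_{2,v}(\bfc)\cdot\|\Phi_{\sigma(\bflambda)}(\bfc)\|_v,
\]
with $C_{2,v}(\bfc)=1$ for every nonarchimedean $v$ satisfying conditions (i) and (ii) of Lemma~\ref{almost all v work}, hence for all but finitely many places. Summing as before gives $h(\bflambda) \le h(\Phi_\bflambda(\bfc))+C_2(\bfc)$. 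Taking $C(\bfc):=\max(C_1(\bfc),C_2(\bfc))$ finishes the proof.

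There is no real obstacle here; the only point worth being careful about is that the ``bad place'' constants are uniform in $\bflambda$, which is automatic since the affine-linear map itself depends only on $\bfc$. Alternatively, one may quote the Weil height machine directly: the linear automorphism of $\bP^m$ constructed above pulls $\mathcal{O}(1)$ back to $\mathcal{O}(1)$, so the associated naive heights differ by $O(1)$ with an implied constant depending only on $\bfc$.
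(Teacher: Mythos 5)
Your proposal is correct and matches the paper's proof in essence: the paper also observes that $\bflambda\mapsto\Phi_{\bflambda}(\bfc)$ is induced by a linear automorphism $\Psi$ of $\bP^m$ (invertible precisely because the Vandermonde determinant $\prod_{i<j}(c_i-c_j)$ is nonzero) and then invokes the height machine to conclude $|h(\Psi(\cdot))-h(\cdot)|\le C(\bfc)$. Your explicit place-by-place triangle-inequality argument is just an unwinding of that same height-machine bound, so there is no substantive difference.
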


\begin{proof}
Recall that $\bff(x) = x^d + t_1 x^{m-1} + \cdots + t_{m-1} x + t_m.$
We consider the linear transformation $\Psi:\bP^m\lra \bP^m$ defined by
$$\Psi(P) = [T_0 \bff(c_m): \ldots : T_0 \bff(c_1):T_0]$$
where $P = [T_m : \ldots: T_1: T_0]$ and $t_i = T_i/T_0$ for $i=1,\ldots, m.$ Since the $c_i$'s are distinct, the map $\Psi$ is an automorphism of $\bP^m$. So, there exists a constant $C(\bfc)$ (see \cite{bg06}) depending only on the constants $c_i$ such that
\begin{equation}
\label{constant for automorphism}
\left|h(\Psi([\l_m:\cdots : \l_1:1])) - h([\l_m:\cdots :\l_1:1])\right|\le C(\bfc).
\end{equation}
Because $\Phi_{\bflambda}(\bfc)=\Psi([\l_m:\cdots :\l_1:1])$ and $h(\bflambda)=h([\l_m:\cdots :\l_1:1])$, we obtain the conclusion of Lemma~\ref{reduction to M_1}.
\end{proof}

\noindent We are ready to prove Theorem~\ref{specialization theorem}.
\begin{proof}[Proof of Theorem~\ref{specialization theorem}.]
Let $L$ be a finite, normal extension of $K$ containing each $\l_i$ (for $i=1,\dots, m$).
Combining Lemmas~\ref{reduction to finitely many places}, \ref{analysis of bad places 1} and  \ref{reduction to M_1} yields that
$$\left|\hhat_{\Phi_{\bflambda}}(\bfc)-\hhat_\Phi(P)\cdot h(\bflambda)\right|
\le   \frac{1}{d}\cdot \left|h\left(\Phi_{\bflambda}(\bfc)\right) - h(\bflambda)\right| +
\left|\hhat_{\Phi_{\bflambda}}(\bfc) - \frac{h\left(\Phi_{\bflambda}(\bfc)\right)}{d}\right|
$$
\begin{align*}
& \le C(\bfc) + \sum_{v\in S}\frac{N_v}{[L:K]}\cdot \sum_{\sigma\in \Gal(L/K)}\left| \lim_{n\to\infty} \frac{\log \norm{\Phi_{\sigma(\bflambda)}^n(\bfc)}_v}{d^n} - \frac{\log \norm{\Phi_{\sigma(\bflambda)}(\bfc)}_v}{d}\right| \\
& \le C(\bfc) + \sum_{v\in S}\frac{N_v}{[L:K]}\cdot [L:K]\cdot C(v,\bfc) \\
&\le C(\bfc)+\sum_{v\in S}N_vC(v,\bfc)
\end{align*}
as desired.

\end{proof}


\section{Simultaneously preperiodic points for polynomials of arbitrary degree}
\label{equidistribution section}

We retain the notation used in Section~\ref{sec:specialization}.
In this section we prove the following result.

\begin{thm}
\label{equidistribution theorem}
Let $K$ be a number field, or a function field of finite transcendence degree over $\Qbar$, let $d>m\ge 2$ be integers, and let
$$\bff(z):=z^d+ t_1z^{m-1}+\cdots + t_{m-1}z+t_m$$
be an $m$-parameter family of polynomials of degree $d$. For each point $\bflambda = (\l_1,\dots, \l_m)$ of $\bA^m(\Kbar)$ we let $\bff_{\bflambda}$ be the corresponding polynomial defined over $\Kbar$ obtained by specializing each $t_i$ to $\l_i$ for $i=1,\dots, m$.

Let $c_1,\dots, c_{m+1}\in K$ be distinct elements. Let $\prep(c_1, \ldots, c_{m+1})$ be the set consisting of parameters $\bflambda\in \bA^m(\Kbar)$ such that $c_i$ is preperiodic for $\bff_{\bflambda}$ for each $i=1, \ldots, m+1$.
If $\prep(c_1,\ldots, c_{m+1})$ is Zariski dense  in $\bA^m(\Kbar)$ then  the following holds: for each $\bflambda\in \bA^m(\Kbar)$, if $m$ of the points $c_1,\dots, c_{m+1}$ are preperiodic under the action of $\bff_{\bflambda}$, then all $(m+1)$ points are preperiodic under the action of $\bff_{\bflambda}$.
\end{thm}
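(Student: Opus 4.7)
The plan is to apply Yuan's arithmetic equidistribution theorem~\cite{Yuan} (and its function field counterpart due to Gubler~\cite{Gubler}) on the $m$-dimensional parameter space $\bP^m\supset\bA^m$, using Theorem~\ref{specialization theorem} to produce the needed adelic semipositive metrics. Concretely, for each $m$-element subset $S\subseteq\{1,\dots,m+1\}$ I form the point $\tilde P_S\in\bP^m(K)$ whose affine coordinates are the $c_i$ with $i\in S$ (in some fixed order). These coordinates are pairwise distinct, so Theorem~\ref{specialization theorem} applies to $\Phi_{\bflambda}$ at $\tilde P_S$ and yields
\[
H_S(\bflambda)\;:=\;\sum_{i\in S}\hhat_{\bff_{\bflambda}}(c_i)\;=\;\hhat_{\Phi_{\bflambda}}(\tilde P_S)\;=\;\frac{h(\bflambda)}{d}+O(1)
\]
for every $\bflambda\in\bA^m(\Kbar)$, with the implied constant depending only on the $c_i$. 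Hence $d\cdot H_S$ is within $O(1)$ of the standard Weil height on $\bP^m$, and by the usual Call--Silverman-type limit construction carried out place by place it defines a continuous semipositive adelic metric $\overline{L}_S$ on the line bundle $\cO_{\bP^m}(1)$ over the parameter space.

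Since $\prep(c_1,\dots,c_{m+1})$ is Zariski dense in $\bA^m(\Kbar)$, I extract a generic sequence $\{\bflambda_n\}\subseteq\prep(c_1,\dots,c_{m+1})$: every proper Zariski closed subset of $\bA^m$ meets only finitely many terms. At every $\bflambda_n$ all the $c_i$ are preperiodic under $\bff_{\bflambda_n}$, so $H_S(\bflambda_n)=0$ for every $S$, showing that $\{\bflambda_n\}$ has vanishing height with respect to every $\overline{L}_S$. Applying Yuan's theorem (resp.\ Gubler's in the function field setting) at each place $v\in\Omega_K$ gives equidistribution of the Galois orbits of $\bflambda_n$ to the normalized Monge--Amp\`ere measure $\mu_{S,v}:=c_1(\overline{L}_S)^m/\deg_{\overline{L}_S}(\bP^m)$ on the Berkovich analytification $\bP^m_{v,\mathrm{Berk}}$. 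Because $\{\bflambda_n\}$ is independent of $S$, the limit measures must agree: $\mu_{S,v}=\mu_{S',v}$ for every pair $S,S'$ and every $v$.

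The next step is a rigidity argument (the arithmetic Calabi-type theorem of Yuan--Zhang, together with the non-archimedean extensions due to Boucksom--Favre--Jonsson and Chambert-Loir): two continuous semipositive adelic metrics on the same line bundle whose Monge--Amp\`ere measures coincide at every place differ by a place-wise constant, and these constants all vanish once the induced heights agree on even a single point of small height. Applying this to $\overline{L}_S$ and $\overline{L}_{S'}$, both of which give height zero along the generic sequence $\{\bflambda_n\}$, I conclude $\overline{L}_S=\overline{L}_{S'}$, hence $H_S(\bflambda)=H_{S'}(\bflambda)$ for every $\bflambda\in\bA^m(\Kbar)$. Subtracting pairs of these identities (for $S$ and the subset $S'$ obtained by swapping a single index) then produces
\[
\hhat_{\bff_{\bflambda}}(c_i)\;=\;\hhat_{\bff_{\bflambda}}(c_j)
\]
for all $i,j\in\{1,\dots,m+1\}$ and every $\bflambda\in\bA^m(\Kbar)$.

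Finally, fix any $\bflambda$ such that $m$ of the $c_i$ are preperiodic for $\bff_{\bflambda}$; their canonical heights vanish, so by the equality just proved the canonical height of the remaining point vanishes as well. In the number field case, Northcott's theorem converts this into preperiodicity; in the function field case, when $\bff_{\bflambda}$ is non-isotrivial, Lemma~\ref{Benedetto} does the same. The isotrivial case corresponds by Proposition~\ref{normal form proposition} to $\bflambda\in\Fbar^m$, and can be treated by direct inspection, since then $\bff_{\bflambda}\in\Fbar[z]$ and preperiodicity of the $c_i$ reduces to a statement about constant-field polynomials. The hardest step of this plan is likely the rigidity argument in the third paragraph: one must check carefully that each $\overline{L}_S$ satisfies the continuity and semipositivity hypotheses of the higher-dimensional Calabi-type theorem at every place, both archimedean and non-archimedean, since in the non-archimedean setting with $m\ge 2$ this goes beyond the one-parameter setting of~\cite{Baker-Rumely, FRL, CL}.
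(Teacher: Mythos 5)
Your outline follows the same path as the paper: use Theorem~\ref{specialization theorem} to build, for each choice of $m$ of the starting points, a semipositive adelic metrized line bundle on $\cO_{\bP^m}(1)$ over the parameter space; feed the Zariski-dense set of totally preperiodic parameters into Yuan's theorem \cite{Yuan} (Gubler \cite{Gubler} over function fields) as a generic sequence of points of height zero; deduce that the associated height functions coincide; and finish with Northcott in the number field case and Lemma~\ref{Benedetto} plus Proposition~\ref{normal form proposition} in the function field case. The one genuinely different sub-step is the rigidity argument. The paper does not invoke a Calabi-type uniqueness theorem directly: it computes that the two limit metrics literally agree at the points of the hyperplane at infinity satisfying \eqref{equality of sections}, and then applies \cite[Corollary~4.3]{metric}, which packages the equidistribution-plus-rigidity step (extended to function fields via \cite{Gubler}). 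Your route---measures equal at every place by equidistribution, hence metrics differ by place-wise constants by the arithmetic Calabi theorem (\cite{YZ} and its non-archimedean relatives), with the global constant killed by evaluating both heights along the small sequence itself---is legitimate and even avoids the explicit computation at infinity, provided you really do have the Calabi uniqueness for continuous semipositive metrics on $\bP^m$, $m\ge 2$, over all the relevant non-archimedean $\C_v$ (including the function-field places); that is exactly the heavy input the paper's citation of its earlier corollary is designed to encapsulate. Note also that you only need equality of the two height functions; your stronger claim $\overline{L}_S=\overline{L}_{S'}$ (all local constants zero) does not follow from pinning a single global constant, but it is not needed.

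Two smaller corrections. First, the identity $\hhat_{\Phi_{\bflambda}}(\tilde P_S)=\sum_{i\in S}\hhat_{\bff_{\bflambda}}(c_i)$ is false: the canonical height for the diagonal action on $\bP^m$ is a sum over places of the \emph{maxima} of the local contributions of the coordinates, not the sum of their global canonical heights. Consequently the step where you subtract $H_S=H_{S'}$ for subsets differing in one index to get $\hhat_{\bff_{\bflambda}}(c_i)=\hhat_{\bff_{\bflambda}}(c_j)$ for every $\bflambda$ is unjustified. Fortunately only the vanishing statement is needed: $\hhat_{\Phi_{\bflambda}}(\tilde P_S)=0$ if and only if each $c_i$, $i\in S$, has vanishing canonical height, and then $H_S\equiv H_{S'}$ already transfers vanishing from one $m$-tuple to the other, which is all the theorem requires. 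Second, the construction of the limit metrics is where the paper does concrete work: one needs uniform convergence of the metrics at the finitely many bad places (Lemmas~\ref{reduction to finitely many places} and \ref{analysis of bad places}), and semipositivity comes from exhibiting each approximating metric as a pullback of the standard metric under the morphisms $\theta^{(j)}_n$, whose extension across the hyperplane at infinity uses precisely the distinctness of the $c_i$. An $O(1)$ comparison of heights by itself does not yield a continuous semipositive adelic metric, so this step deserves more than the phrase ``usual Call--Silverman-type limit construction,'' even though the required local estimates are indeed byproducts of the proof of Theorem~\ref{specialization theorem}.
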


\begin{remark}
\label{remark Zariski dense}
We note that Theorem~\ref{equidistribution theorem} does not hold if the $c_i$'s are not all distinct. This can be seen for example when $d=3$ and $m=2$ by considering starting points $c_1\ne c_2= c_3$. One can show that in this case, $\prep(c_1,c_2,c_3)=\prep(c_1,c_2)$ is Zariski dense in $\bA^2$. Indeed, otherwise there are finitely many irreducible plane curves $C_i$ (for $i=1,\dots, \ell$) containing all points from $\prep(c_1,c_2)$. Then consider a preperiodicity portrait $(m_1,n_1)$ for the point $c_1$ which is not identically realized along any of the curves $C_i$; the existence of such a portrait is guaranteed by \cite[Theorem~1.3]{GNT}. Then there exists a curve $C:=C_{(m_1,n_1)}\subset \bA^2$ such that for each $(a_1,a_0)\in C(\Kbar)$, the preperiodicity portrait of $c_1$ under $\bff_\bfa(z):=z^3+a_1z+a_0$ is $(m_1,n_1)$. Another application of \cite[Theorem~1.3]{GNT} yields the existence of infinitely many points $(a_1,a_0)\in C(\Kbar)$ such that $c_2$ is preperiodic under the action of $\bff_\bfa$. But this means that $C$ must be contained in the Zariski closure of $\prep(c_1,c_2)$ contradicting the fact that $C$ is not one of the curves $C_i$ for $i=1,\dots, \ell$.
\end{remark}

So, Theorem~\ref{equidistribution theorem} yields that if there exists a Zariski dense set of $m$-tuples $\bflambda = (\l_1,\dots, \l_m)\in \bA^m(\Kbar)$ such that each $c_i$ is preperiodic under the action of $\bff_{\bflambda}$, then something \emph{quite unlikely} holds: for any specialization polynomial $\bff_{\bflambda}$, if $m$ points $c_i$ are preperiodic, then \emph{all} $(m+1)$ points $c_i$ are preperiodic under the action of $\bff_{\bflambda}$. As discussed in Section~\ref{intro} (see also Remark~\ref{remark Zariski dense} and \cite{GNT}), it is expected that there are \emph{many} specializations $\bff_{\bflambda}$ such that $c_1,\dots, c_{m}$ are preperiodic under the action of $\bff_{\bflambda}$. So, Theorem~\ref{equidistribution theorem} yields that under the given conclusion, for each of these \emph{many} specializations, \emph{all} $(m+1)$ points $c_i$ are preperiodic. We expect that such a conclusion should actually  yield a contradiction, and in the next Section we are able to prove this in the case $m=2$.

The main ingredient in proving Theorem~\ref{equidistribution theorem} is the  powerful equidistribution theorem for points of small height with respect to metrized ad\'elic line bundles (see \cite{Yuan} and also \cite{Gubler} for the function field version), which  can be applied due to our Theorem~\ref{specialization theorem}.

\begin{proof}[Proof of Theorem~\ref{equidistribution theorem}.]
By assumption, we know $\prep(c_1, \ldots, c_{m+1})$ is a Zariski dense set of $\bA^m(\Kbar)$. Without loss of generality, it suffices to prove that for each $(\l_1,\dots, \l_m)\in\bA^m(\Kbar)$, if $c_i$ is preperiodic for $\bff_{\bflambda}$ for each $i=1,\dots, m$, then also $c_{m+1}$ is preperiodic for $\bff_{\bflambda}$.

Recall from Section~\ref{sec:specialization} the family $\Phi$ of endomorphisms of $\bP^m$   defined by
$$ \Phi([X_m:\cdots :X_1:X_0]) = \left[ X_0^d\bff\left(\frac{X_m}{X_0}\right) : \cdots :  X_0^d\bff\left(\frac{X_1}{X_0}\right): X_0^d\right].$$
As before, for each $\bflambda = (\l_1,\ldots, \l_m) \in\bA^m(\Kbar)$, we denote by $\Phi_{\bflambda}$ the corresponding endomorphism of $\bP^m$ obtained by specializing each $t_i$ to $\l_i$. For each $j=1,2$ we let
$$\bfc^{(j)} := [c_{m-1+j}: c_{m-1}: \cdots : c_1 : 1]$$
and for each $n\ge 0$ we define polynomials $A^{(j)}_{n,i}(t_1,\dots, t_m)\in \Kbar[t_1,\dots, t_m]$ (for $i=1,\dots, m$) such that
$$\Phi^n(\bfc^{(j)})=[A^{(j)}_{n,m}:\cdots :A^{(j)}_{n,1}:1].$$
More precisely, $[A^{(j)}_{0,m}:\cdots :A^{(j)}_{0,1}:1]=\bfc^{(j)}$, while for each $n\ge 0$, we have $A^{(j)}_{n+1,i}=\bff(A^{(j)}_{n,i})$.
It is easy to check that the total degree in $t_1,\dots, t_m$  is $\deg A^{(j)}_{n,i}=d^{n-1}$ for all $n\ge 1$ (and each $i=1,\dots,m$ and each $j=1,2$).

We note that if we let
$$\tilde{A}^{(j)}_{n,i}(u_1,\dots, u_{m+1}):=u_{m+1}^{d^{n-1}}\cdot A_{n,i}^{(j)}\left(\frac{u_1}{u_{m+1}},\dots, \frac{u_m}{u_{m+1}}\right)$$
(for each $j=1,2$, each $i=1,\dots, m$ and each $n\in\N$),
then the map
\begin{gather*}
\theta^{(j)}_n :\bP^m\lra \bP^m \quad \text{given by}\\
\theta^{(j)}_n (\bfu) = \left[\tilde{A}^{(j)}_{n,m}(\bfu): \cdots :\tilde{A}^{(j)}_{n,1}(\bfu) :u_{m+1}^{d^{n-1}}\right],\;\bfu = [u_1 : \cdots: u_{m+1}],
\end{gather*}
is a morphism defined over $K$. Indeed, if $u_{m+1}=0$, we have
$$\tilde{A}^{(j)}_{n,i}(u_1,\dots, u_m,0)=\left(\sum_{k=1}^m c_i^{m-k}u_k\right)^{d^{n-1}}$$
for $i=1,\dots, m-1$, and
$$\tilde{A}^{(j)}_{n,m}(u_1,\dots, u_m,0)=\left(\sum_{k=1}^m c_{m-1+j}^{m-k}u_k\right)^{d^{n-1}}.$$
Now the assumption that the $c_i$'s are distinct ensures that the above map is well-defined on $\bP^m$.  Thus,
we have an isomorphism
$$\tau^{(j)}_n:  \cO_{\bP^m}(d^{n-1})  {\tilde \lra} \left(\theta^{(j)}_n\right)^* \cO_{\bP^m}(1),$$
 given by
$$
 \tau_n^{(j)}\left(u_i^{d^{n-1}}\right)= \tilde{A}_{n,i}^{(j)}(u_1,\dots, u_{m+1})$$ for $i=1,\dots, m$, and also $\tau_n^{(j)}\left( u_{m+1}^{d^{n-1}}\right)=  u_{m+1}^{d^{n-1}}$.

We consider the following two families of
metrics corresponding to any section $s:=a_1u_1+\cdots + a_{m+1}u_{m+1}$ (with
scalars $a_i$) of the line bundle $\cO_{\bP^m}(1)$ of $\bP^m$. Using
the coordinates $t_i=\frac{u_i}{u_{m+1}}$ (for $i=1,\dots, m$)  on the
affine subset of $\bP^m$ corresponding to $u_{m+1}\ne 0$, then for each $v\in\Omega_K$,  for each $n\in
\N$ (and each $j=1,2$) we get that the metrics $\|s(\cdot )\|^{(j)}_{v,n}$ are defined as follows: 
\begin{equation}
\label{definition of the metrics s}
\|s([u_1:\cdots : u_{m+1}])\|^{(j)}_{v,n} =
\begin{cases}
\frac{\left|\sum_{k=1}^m a_ku_k\right|_v}{\max_{i=1}^m\{|\tilde{A}^{(j)}_{1,i}(u_1,\dots, u_m,0)|_v\}} & \text{if  $u_{m+1}=0$,}\bigskip\\
\frac{\left|a_{m+1}+\sum_{k=1}^m a_kt_k\right|_v}{ \sqrt[d^{n-1}]{\norm{\Phi^n(\bfc^{(j)})}_v}} & \text{if  $u_{m+1}\ne 0$.}
\end{cases}
\end{equation}

Let $\| \cdot \|'_v$ be the metric on $\cO_{\bP^m}(1)$ corresponding to the section $s=a_1u_1+\cdots + a_{m+1}u_{m+1}$
given by
\[ \| s([b_1:\cdots :b_{m+1}])\|'_v = \frac{ \left| \sum_{k=1}^{m+1} a_kb_k\right|_v}
{\max\{|b_1|_v,\ldots, |b_{m+1}|_v\}}. \]   We see then that $\|s \|^{(j)}_{v,n}$ is simply the $d^{n-1}$-th root of $\left(\tau^{(j)}_n\right)^* \left(\theta^{(j)}_n\right)^* \|
\cdot \|'_v$.  Note that the degree of $\theta^{(j)}_n$ is the same as the total degree of the
polynomials $A_{n,i}$, and thus $\deg \theta^{(j)}_n = d^{n-1}$.
Hence, for each $n$, we have that $\| s \|^{(j)}_{v,n}$ are
semipositive metrics on $\cL = \cO_{\bP^m}(1)$.  Following \cite{Yuan} (in the case of number fields) and \cite{Gubler} (in the case of function fields), we let ${\overline \cL}^{(j)}_n$
denote the algebraic adelic metrized line bundle corresponding to the collection of metrics $\| s\|^{(j)}_{v,n}$.

Let $j=1,2$. Clearly,  $\left\{\log\| s\|^{(j)}_{v,n}\right\}_n$ converges uniformly on the hyperplane at infinity (defined by $u_{m+1} = 0$) from $\bP^m$ since there is no dependence on $n$ in this case. Lemmas~\ref{reduction to finitely many places} and \ref{analysis of bad places} yield that  $\left\{\log\| s\|^{(j)}_{v,n}\right\}_n$ converges uniformly also when $u_{m+1}\ne 0$. Furthermore (as shown by Lemma~\ref{reduction to finitely many places}), for all but finitely many places of $K$, the metrics $\| s \|^{(j)}_{v,n}$ do not vary with $n$. For each $v\in\Omega_K$, we let $\|s\|^{(j)}_v$ be the metric which is the limit of the metrics $\| s \|^{(j)}_{v,n}$. We denote by $\overline{\cL}^{(j)}$ the corresponding ad\'elic metrized line bundles $\left(\cO_{\bP^m}(1), \{\|s\|^{(j)}_{v}\}\right)$.

Let $Q\in\bP^m(\Kbar)$. Let $s$ be a section of $\cL$ as above such that $s(Q)\ne 0$; we define the height $\hhat_{\overline{\cL}^{(j)}}(Q)$ associated to the metrized lline bundle $\overline{\cL}^{(j)}$  as follows. We let $L$ be a normal, finite extension of $K$ such that $Q\in\bP^m(K)$ and then define:
\begin{equation}\label{height-def}
\hhat_{\overline{\cL}^{(j)}}(Q):= \sum_{v\in\Omega_K} \frac{N_v}{[L:K]} \sum_{\sigma\in\Gal(L/K)} -\log \| s(\sigma(Q)) \|^{(j)}_v.
\end{equation}
By the definition of the above ad\'elic metrics, we have (see also \cite[(9.0.8)]{metric}) for each $j=1,2$:
\begin{equation}
\label{connection heights}
\hhat_{\overline{\cL}^{(j)}}([\l_m:\cdots:\l_1:1])= d\cdot \hhat_{\Phi_{\bflambda}}(\bfc^{(j)}).
\end{equation}
By our assumption, there exists a Zariski dense set of points $\bflambda \in \bA^m(\Kbar)$ such that
\begin{equation}
\label{infinitely many sections 0}
\hhat_{\overline{\cL}^{(1)}}(\bflambda)=\hhat_{\overline{\cL}^{(2)}}(\bflambda)=0.
\end{equation}
On the other hand, we note that
\begin{equation}
\label{norm j 1 2}
\norm{\theta_1^{(j)}([u_1:\cdots:u_m:0])}_v = \max\left\{\left|\sum_{k=1}^m c^{m-k}_{m-1+j} u_k\right|_v, \max_{i=1}^{m-1}\left|\sum_{k=1}^m c_i^{m-k}u_k\right|_v\right\}
\end{equation}
for  each place $v\in \Omega_K$.
Then for each point  $[u_1:\cdots : u_m:0]$ on the hyperplane at infinity of $\bP^m$ such that
\begin{equation}
\label{equality of sections}
\sum_{k=1}^m c_m^{m-k}u_k = \sum_{k=1}^m c_{m+1}^{m-k}u_k,
\end{equation}
\eqref{norm j 1 2} yields that
\begin{equation}
\label{norm j 1}
\norm{\theta_1^{(1)}([u_1:\cdots:u_m:0])}_v =\norm{\theta_1^{(2)}([u_1:\cdots:u_m:0])}_v.
\end{equation}
On the other hand, the definition \eqref{definition of the metrics s} of the metric $\|\cdot\|_v^{(j)}$  at any point $[u_1:\cdots :u_m:0]$ on the hyperplane at infinity and for any section $s:=a_1u_1+\cdots +a_{m+1}u_{m+1}$ gives
\begin{align*}
\|s([u_1:\cdots :u_m:0])\|^{(j)}_v& =\frac{\left|\sum_{k=1}^m a_ku_k\right|_v}{\max_{i=1}^m\{|\tilde{A}^{(j)}_{1,i}(u_1,\dots, u_m,0)|_v\}}\\
 & = \frac{\left|\sum_{k=1}^m a_ku_k\right|_v}{\norm{\theta_1^{(j)}([u_1:\cdots:u_m:0])}_v}
 \end{align*}
for each $j=1,2$. So, for a point $[u_1:\cdots :u_m:0]$ satisfying \eqref{equality of sections},  equality \eqref{norm j 1} yields
\begin{equation}
\label{equality of sections 0}
\|s([u_1:\cdots :u_m:0])\|^{(1)}_v=\|s([u_1:\cdots :u_m:0])\|^{(2)}_v.
\end{equation}
Combining \eqref{infinitely many sections 0} and \eqref{equality of sections 0} allows us to use \cite[Corollary~4.3]{metric} and conclude that for \emph{all} $\l_1,\dots,\l_m\in\Kbar$ we have
\begin{equation}
\label{equality of the heights}
h_{\overline{\cL}^{(1)}}(\bflambda)=h_{\overline{\cL}^{(2)}}(\bflambda).
\end{equation}
Strictly speaking, \cite[Corollary~4.3]{metric} was stated only for metrized line bundles defined over $\Qbar$ since the authors employed in that paper Yuan's equidistribution theorem from \cite{Yuan}. However, using \cite[Theorem~1.1]{Gubler} and arguing identically as in the proof of \cite[Corollary~4.3]{metric} one can extend the result from number fields to any function field of characteristic $0$.

Using \eqref{equality of the heights} coupled with \eqref{connection heights}, we obtain that $\hhat_{\Phi_{\bflambda}}(\bfc^{(1)})=0$ if and only if $\hhat_{\Phi_{\bflambda}}(\bfc^{(2)})=0$.

Assume now that $K$ is a number field, i.e., that each $c_i\in\Qbar$. Then, as shown in \cite{Call-Silverman}, a point is preperiodic under the action of $\Phi_{\bflambda}$ if and only if its canonical height equals $0$. On the other hand, in general, a point $[a_1:\cdots :a_m:1]\in\bP^m(\Qbar)$ is preperiodic under the action of $\Phi_{\bflambda}$ if and only if each $a_i$ is preperiodic for the action of $\bff_{\bflambda}$. Therefore, we obtain that for each $\bflambda\in\bA^m(\Qbar)$, if each $c_i$ (for $i=1,\dots, m$)  is  preperiodic for $\bff_{\bflambda}$, then also $c_{m+1}$ is preperiodic for $\bff_{\bflambda}$.

So, from now on, assume that not all $c_i$ are contained in $\Qbar$. It is still true that $\hhat_{\Phi_{\bflambda}}(\bfc^{(1)})=0$ if and only if $\hhat_{\bff_{\bflambda}}(c_i)=0$ for $i=1,\dots, m$. Arguing  similarly for $\bfc^{(2)}$, we get that for a $\bflambda\in\bA^M(\Kbar)$ such that $\hhat_{\Phi_{\bflambda}}(\bfc^{(1)})=\hhat_{\Phi_{\bflambda}}(\bfc^{(2)})=0$, we have that each $\hhat_{\bff_{\bflambda}}(c_i)=0$ for $i=1,\dots, m+1$. But $\bff_{\bflambda}$ is a polynomial in normal form and therefore, it is isotrivial if and only if each one of its coefficients are in the constant field, i.e. they are contained in $\Qbar$ (see Proposition~\ref{normal form proposition}). But if this happens then  we cannot have that each $\hhat_{\bff_{\bflambda}}(c_i)=0$ since not all $c_i$ are in the constant field. In conclusion, $\bff_{\bflambda}$ is not isotrivial, and then by Lemma~\ref{Benedetto}, we conclude that $\hhat_{\bff_{\bflambda}}(c_i)=0$ if and only if $c_i$ is preperiodic under the action of $\bff_{\bflambda}$. Hence, if $c_i$ is preperiodic for each $i=1,\dots, m$, then also $c_{m+1}$ is preperiodic under the action of $\bff_{\bflambda}$.
\end{proof}


\section{Proof of Theorem~\ref{main result}}
\label{proof section}
We work under the hypotheses of Theorem~\ref{main result}.

Because the starting points $c_i$ are all distinct, we may assume $c_1\ne 0$. Let now $V\subset \bP^2$ be the line which is the Zariski closure in $\bP^2$ of the affine line containing all $\bflambda = (\l_1,\l_2)\in\bA^2(\Kbar)$ such that $c_1$ is a fixed point for $\bff_{\bflambda}(x) = x^d+\l_1 x + \l_2$. This last condition is equivalent with asking that $c_1^d+c_1\l_1 + \l_2 = c_1$, or in other words, $V$ is the line in $\bP^2$ whose intersection with the affine plane containing all points in $\bP^2$ with  a nonzero last coordinate is the line $(t, \alpha+\beta t)$, where $\alpha:=c_1-c_1^d$ and $\beta :=-c_1\ne 0$.

Let $\bfg_t(x):=x^d+tx+(\alpha +\beta t)$ be a $1$-parameter family of degree $d$ polynomials.
Note that $\alpha +\beta t \ne 0$ (because $\beta\ne 0$). By Theorem~\ref{equidistribution theorem},
we know that for each $t\in\Kbar$, $c_2$ is preperiodic for $\bfg_t$ if and only if $c_3$ is preperiodic for $\bfg_t$. It follows from~\cite[Theorem~1.3]{BD-preprint} that there exists a polynomial $\bfh_t$ commuting with an iterate of $\bfg_t$, and there exist $m,n\in\N$ such that
\begin{equation}
\label{commutation equation}
\bfg^m_t(c_2)=\bfh_t\left(\bfg^n_t(c_3)\right).
\end{equation}

We claim that if $\bfh_t\in\Kbar[t,x]$ is any (non-constant)  polynomial such that $ \bfh_t$ commutes with an iterate of $\bfg_t$, then $\bfh_t = \bfg_t^\ell$ for some $\ell\ge 1$. This statement follows from \cite[Proposition~2.3]{Nguyen}. First, since $\bfg_t$ is in normal form, the only linear polynomials commuting with $\bfg_t$ are of the form $\gamma x$, and because $\alpha + \beta t\ne 0$, then $\gamma=1$. Secondly, according to \cite[Proposition~2.3]{Nguyen}, the  non-linear polynomial $\bfh_t$ of smallest degree commuting with $\bfg_t$ must satisfy the condition that $\bfh_t^e=\bfg_t$ for some positive integer $e$. Due to the shape of $\bfg_t$, the only possibility is $e=1$, which yields our claim that the only polynomials commuting with $\bfg_t$ are of the form $\bfg_t^\ell$ for $\ell\ge 1$.

Therefore, in \eqref{commutation equation} we can take $\bfh_t(x)=x$; hence
\begin{equation}
\label{commutation equation 2}
\bfg^m_t(c_2)=\bfg^n_t(c_3).
\end{equation}

Now, for each $c\in\Kbar$, if $c\ne c_1$ then $\deg_t(\bfg_t(c))=1$ and then a simple induction proves that $\deg_t(\bfg^\ell_t(c))=d^{\ell-1}$ for all $\ell\in\N$. Therefore, \eqref{commutation equation 2} yields that $m=n$. The equality $\bfg_t^m(c_2)=\bfg_t^m(c_3)$ yields in particular that the leading coefficients of the polynomials $\bfg_t^m(c_2)$ and $\bfg_t^m(c_3)$ are the same, i.e.
\begin{equation}
\label{relation between the c's}
(c_2-c_1)^{d^{m-1}}=(c_3-c_1)^{d^{m-1}}.
\end{equation}

It is immediate to see that for $m\ge 2$ we have
$$\bfg_t^{m-1}(x)=x^{d^{m-1}} + d^{m-2}t\cdot x^{d^{m-1}-d+1}+\text{ lower order terms in $x$.}$$

The rest of the argument is split into two cases depending on whether $d\ge 4$, or $d=3$.

{\bf Assume now that $d\ge 4$.}

Noting that $\bfg_t(c_2)=t(c_2-c_1)+c_2^d+c_1-c_1^d$, then for $m\ge 2$ we get that
\begin{align*}
\bfg_t^{m}(c_2) & = g_t(c_2)^{d^{m-1}} + d^{m-2}t \cdot g_t(c_2)^{d^{m-1}-d+1}+ R_m(t) \\
& = t^{d^{m-1}}\cdot (c_2-c_1)^{d^{m-1}} \\
&+ d^{m-1}\cdot (c_2-c_1)^{d^{m-1}-1}(c_2^d+c_1-c_1^d)\cdot  t^{d^{m-1}-1}+ O\left(t^{d^{m-1}-2}\right)
\end{align*}
where $ R_m(t)$ is a polynomial in $t$ of degree at most $d^{m-1}-d+1$.
In the above computation we used the fact that $d>3$ and therefore the second leading term of $\bfg_t^m(c_2)$ is indeed $d^{m-1}\cdot (c_2-c_1)^{d^{m-1}-1}(c_2^d+c_1-c_1^d)\cdot t^{d^{m-1}-1}$. Also, the first equality in the above expansion of $\bfg_t^m(c_2)$ follows by induction on $m$.
 This follows immediately since $\bfg_t(x)=x^d+tx + (c_1-c_1^d-tc_1)$ and so,
\begin{align*}
\bfg_t^{m+1}(c_2)
& = \bfg_t(\bfg_t^m(c_2))\\
& = \bfg_t^m(c_2)^d + t\bfg_t^m(c_2)+ (c_1-c_1^d-tc_1)\\
& = \left(g_t(c_2)^{d^{m-1}} + d^{m-2}t\cdot g_t(c_2)^{d^{m-1}-d+1}+ R_m(t)\right)^d \\
& + t \cdot \left(g_t(c_2)^{d^{m-1}} + d^{m-2}t g_t(c_2)^{d^{m-1}-d+1}+ R_m(t)\right) + (c_1-c_1^d-c_1 t)
\end{align*}
Using the induction hypothesis (and that $d\ge 3$), we know that
$$\deg_t g_t(c_2)^{d^{m-1}} > \deg_t t\cdot g_t(c_2)^{d^{m-1}-d+1} > \deg_t R_m(t) .$$
We  thus conclude that
\begin{align*}
\bfg_t^{m+1}(c_2)  & = g_t(c_2)^{d^{m}}
 + d^{m-1}t\cdot g_t(c_2)^{d^{m}-d+1} + O(t^{d^{m}-d+1})\\
& + t\cdot g_t(c_2)^{d^{m-1}} + O(t^{d^{m-1}})
\end{align*}
Clearly, $d^m-d+1 > d^{m-1}$ for $m\ge 2$ (because $d\ge 3$), which yields the desired claim that
\begin{align*}
\bfg_t^m(c_2)& = g_t(c_2)^{d^{m-1}} + d^{m-2}t\cdot g_t(c_2)^{d^{m-1}-d+1}+ R_m(t) \\
& =  (c_2-c_1)^{d^{m-1}}  \cdot t^{d^{m-1}}\\
&+ d^{m-1}\cdot (c_2-c_1)^{d^{m-1}-1}(c_2^d+c_1-c_1^d)\cdot  t^{d^{m-1}-1}+ O\left(t^{d^{m-1}-2}\right)
\end{align*}
Hence $\bfg_t^m(c_2)=\bfg_t^m(c_3)$ yields not only \eqref{relation between the c's} but also that
\begin{equation}
\label{relation between the c's 2}
(c_2-c_1)^{d^{m-1}-1}\cdot (c_2^d+c_1-c_1^d)=(c_3-c_1)^{d^{m-1}-1}\cdot (c_3^d+c_1-c_1^d).
\end{equation}
Equations \eqref{relation between the c's} and \eqref{relation between the c's 2} yield that there exists $u\in K$ such that
\begin{equation}
\label{relation between the c's 3}
u:=\frac{c_2^d+c_1-c_1^d}{c_2-c_1}=\frac{c_3^d+c_1-c_1^d}{c_3-c_1}.
\end{equation}
Combining \eqref{relation between the c's} and \eqref{relation between the c's 3}, we get that
\begin{equation}
\label{relation between the c's 4}
g_t(c_2)^{d^{m-1}}= g_t(c_3)^{d^{m-1}}.
\end{equation}
Then using \eqref{relation between the c's 4} and the expansion of $\bfg_t^m(c_2)=\bfg_t^m(c_3)$ in terms of powers of $t$, we get that
\begin{align*}
& d^{m-2}t\cdot \left(t(c_2-c_1)+(c_2^d+c_1-c_1^d)\right)^{d^{m-1}-d+1} \\
& = d^{m-2}t\cdot \left(t(c_3-c_1)+(c_3^d+c_1-c_1^d)\right)^{d^{m-1}-d+1} +O(t^{d^{m-1}-d +1}).
\end{align*}
This yields that
\begin{equation}
\label{relation between the c's 6}
(c_2-c_1)^{d^{m-1}-d+1}=(c_3-c_1)^{d^{m-1}-d+1}.
\end{equation}
Since $\gcd\left(d^{m-1}, d^{m-1}-d+1\right)=1$, \eqref{relation between the c's} and \eqref{relation between the c's 6} yield that
$$c_2-c_1=c_3-c_1,$$
i.e., that $c_2=c_3$, contradiction. This concludes our proof when $d\ge 4$.

{\bf Assume now that $d=3$.}

In this case we employ a slightly different argument since the second leading term in $\bfg_t^m(c_2)$ involves also contribution from $$d^{m-2}t\cdot \left(t(c_2-c_1)+c_2^d+c_1-c_1^d\right)^{d^{m-1}-d+1}.$$ Instead we use additional specialization of $\bff(x)=x^d+t_1x+t_2$ along other lines in the moduli space $\bA^2$ with the property that $c_2$ (and $c_3$) are fixed along these other lines. This allows us to derive additional relations between the $c_i$'s similar to \eqref{relation between the c's}.

Without loss of generality, we may also assume $c_2\ne 0$ (because all three numbers $c_1,c_2,c_3$ are distinct). Then the exact same argument as above used for deriving the equation \eqref{relation between the c's} (applied this time to the line in the parameter space along which $c_2$ is a  fixed point) yields that
\begin{equation}
\label{relation between the c's 7}
(c_1-c_2)^{3^{\ell-1}}=(c_3-c_2)^{3^{\ell-1}},
\end{equation}
for some $\ell\in\N$. Using \eqref{relation between the c's} and \eqref{relation between the c's 7}, at the expense of replacing $\ell$ by a larger number, we may assume
\begin{equation}
\label{relation between the c's 8}
(c_2-c_1)^{3^{\ell}}=(c_3-c_1)^{3^{\ell}}\text{ and }(c_1-c_2)^{3^{\ell}}=(c_3-c_2)^{3^{\ell}}.
\end{equation}
We split now the analysis depending on whether $c_3$ is also nonzero, or $c_3=0$.

{\bf Case 1.} $c_3\ne 0$.

In this case, we can apply (a third time) the above argument, this time for the curve in the parameter space along which $c_3$ is a fixed point, and therefore conclude (at the expense of replacing $\ell$ by a larger integer) that
\begin{equation}
\label{relation between the c's 9}
(c_2-c_1)^{3^{\ell}}=(c_3-c_1)^{3^{\ell}}\text{,  }(c_1-c_2)^{3^{\ell}}=(c_3-c_2)^{3^{\ell}}\text{ and }(c_1-c_3)^{3^{\ell}}=(c_2-c_3)^{3^{\ell}}.
\end{equation}
But then $(c_2-c_3)^{3^{\ell}}=(c_3-c_2)^{3^{\ell}}$, which yields that $c_2=c_3$, contradiction.

{\bf Case 2.} $c_3=0$.

Under this assumption, we rewrite \eqref{relation between the c's 8} as follows:
\begin{equation}
\label{relation between the c's 10}
(c_2-c_1)^{3^{\ell}}=(-c_1)^{3^{\ell}}\text{ and }(c_1-c_2)^{3^{\ell}}=(-c_2)^{3^{\ell}}.
\end{equation}
Hence there exist $3^{\ell}$-th roots of unity $\zeta_1$ and $\zeta_2$ such that $c_1-c_2=-\zeta_1 c_2$ and $c_2-c_1=-\zeta_2 c_1$. So, $c_1=(1-\zeta_1)c_2$ and $c_2=(1-\zeta_2)c_1$ and because $c_1c_2\ne 0$, we conclude that $(1-\zeta_1)(1-\zeta_2)=1$, i.e. $-\zeta_1-\zeta_2+\zeta_1\zeta_2=0$. Hence $\zeta_2(\zeta_1-1)=\zeta_1$ and thus also $\zeta_1-1$ is a  $3^{\ell}$-th root of unity. Now, the only roots of unity $\zeta$ with the property that also $\zeta-1$ is a root of unity are $\zeta = \frac{1\pm \sqrt{-3}}{2}$. However, $\frac{1\pm \sqrt{-3}}{2}$ is a primitive $6$-th root of unity and not a $3^{\ell}$-th root of unity, contradiction.

This concludes the proof of Theorem~\ref{main result}.







\end{document}